\documentclass[11pt]{article}

\usepackage[T1]{fontenc}
\usepackage{amsmath,amssymb,amsfonts,amsthm,mathtools}
\usepackage{mathrsfs}
\usepackage{enumitem}
\usepackage{hyperref}
\usepackage{geometry}

\hypersetup{hidelinks}

\numberwithin{equation}{section}
\hypersetup{
  colorlinks=true,
  linkcolor=blue,
  citecolor=blue,
  urlcolor=blue
}
\theoremstyle{plain}
\newtheorem{theorem}{Theorem}[section]
\newtheorem{proposition}[theorem]{Proposition}
\newtheorem{lemma}[theorem]{Lemma}

\newtheorem{assumption}[theorem]{Assumption}
\newtheorem{definition}[theorem]{Definition}

\newtheorem{remark}{Remark}[section]

\newcommand{\Hh}{\mathsf H}
\newcommand{\Uh}{\mathsf U}
\newcommand{\Q}{\mathsf Q}
\newcommand{\E}{\mathbb E}
\newcommand{\Pp}{\mathbb P}
\newcommand{\R}{\mathbb R}
\newcommand{\N}{\mathcal N}
\newcommand{\Dom}{\mathcal D}
\newcommand{\Ran}{\operatorname{Ran}}
\newcommand{\Tr}{\operatorname{Tr}}
\newcommand{\divnu}{\operatorname{div}_{\nu}}
\newcommand{\EP}{\operatorname{EP}}
\newcommand{\ep}{\operatorname{ep}}
\newcommand{\dd}{\,\mathrm d}
\newcommand{\inner}[2]{\left\langle #1,#2\right\rangle_{\Hh}}

\newcommand{\Lzero}{\mathcal L_0}

\title{Entropy Production and Reversibility Criteria for Stochastic Evolution Equations}

\author{
Jinqiao Duan,\textsuperscript{1} \quad
Ao Zhang\textsuperscript{2}  \quad and \quad
Johannes Zimmer\textsuperscript{3}\\[0.7em]
\small\textsuperscript{1}Department of Mathematics and Department of Physics, Great Bay University,\\[-0.05em]
\small Dongguan, Guangdong 523000, China\\[0.2em]
\small \textsuperscript{2}School of Mathematics and Statistics, HNP-LAMA,
Central South University, \\[-0.05em]
\small Changsha 410083, China\\[0.2em]
\small \textsuperscript{3}School of Computation, Information and Technology,
Technische Universit\"at M\"unchen,\\[-0.05em]
\small Boltzmannstr. 3, 85748 Garching, Germany\\[0.45em]
\small
\href{mailto:duan@gbu.edu.cn}{\texttt{duan@gbu.edu.cn}}\qquad
\href{mailto:aozhang1993@csu.edu.cn}{\texttt{aozhang1993@csu.edu.cn}}
\qquad
\href{mailto:jz@tum.de}{\texttt{jz@tum.de}}
}

\date{}

\begin{document}

\maketitle

\begin{abstract}

This paper develops a path-space theory of entropy production for a class of
stochastic evolution equations on infinite-dimensional Hilbert spaces. Since
such spaces have no canonical Lebesgue reference measure, the usual
finite-dimensional density formulas do not extend directly. We instead work
relative to the invariant Gaussian measure of a reversible
Ornstein--Uhlenbeck reference process. Combining an infinite-dimensional
Girsanov transform, time reversal of the reference process, and the stationary
Fokker--Planck equation relative to the Gaussian measure, we derive an
explicit entropy-production formula in terms of an irreversibility field. On
the natural test class, this field represents the difference between the
forward and reversed nonlinear drifts. Under the standing assumptions,
vanishing entropy production is equivalent to vanishing stationary
probability current, self-adjointness of the generator in the invariant Hilbert space, detailed balance, and invariance of the stationary path law
under time reversal. The reversible case is therefore characterized by a
Gaussian-reference gradient structure for the nonlinear drift.

\end{abstract}

\section{Introduction}
Entropy production quantifies the breaking of time-reversal symmetry in a
stationary stochastic system. If \(\Pp^+_{[0,T]}\) denotes the forward path
law and \(\Pp^-_{[0,T]}\) its time reversal, the relative entropy
\(\mathcal H(\Pp^+_{[0,T]}\mid\Pp^-_{[0,T]})\) measures the statistical
distinguishability of the two directions of time. Its rate therefore links
thermodynamic dissipation to probabilistic irreversibility, detailed balance,
and stationary probability currents. This path-space viewpoint underlies
both diffusion-based entropy-production formulas and fluctuation relations
for stochastic dynamics~\cite{LebowitzSpohn1999, QW99}.

The purpose of this paper is to establish an entropy-production formula for
a class of stochastic evolution equations
\begin{equation}
\label{eq:intro-spde}
    \dd X_t=(AX_t+F(X_t))\,\dd t+B\,\dd W_t,
    \qquad X_t\in\Hh.
\end{equation}
Here \(\Hh\) and \(\Uh\) are real separable Hilbert spaces, \(W\) is a
cylindrical Wiener process on \(\Uh\), \(A\) generates an exponentially
stable semigroup, \(B\) is a Hilbert--Schmidt operator from \(\Uh\) to \(\Hh\) and \(F\) is a nonlinear perturbation. The central idea is to
use the invariant Gaussian measure \(\nu\) of the Ornstein--Uhlenbeck equation
\begin{equation}\label{eq:ou-reference}
    \dd Z_t = AZ_t \, \dd t + B \, \dd W_t.
\end{equation}
We assume that this Ornstein--Uhlenbeck process is reversible with respect to
\(\nu\) and consider a stationary solution of~\eqref{eq:intro-spde} whose
invariant measure \(\mu\) is absolutely continuous with respect to \(\nu\):
\[
    \mu(\dd x)=\rho(x)\nu(\dd x),
\]
with a strictly positive density \(\rho\). The precise hypotheses in
Section~\ref{Sec2} include Gaussian Sobolev regularity of \(\rho\), a
noise-range condition on the drift, and sufficient strong-solution
regularity for the pathwise time-reversal calculation.

Let
\[
    V:=2F-\Q \nabla\log\rho, \qquad \text{with}~\Q:=BB^*.
\]
We call \(V\) the irreversibility field. On the admissible test class used in
the formal adjoint calculation, it represents the difference between
the forward and reversed nonlinear drifts; the conventional irreversible
component is \(V/2\). The regularity assumptions ensure that \(V\) belongs to
the Cameron--Martin noise space \(\Hh_{\Q}=\Ran(\Q^{1/2})\), equipped with the
minimum-control norm \(\|\cdot\|_{\Q^{-1}}\). Under the standing assumptions,
the forward and reversed path laws are equivalent on every finite time
interval, and the local and total entropy production in Theorem~\ref{thm:main}  are
\[
    \ep(x)=\frac12\|V(x)\|_{\Q^{-1}}^2,
    \qquad
    \EP
    =
    \frac12
    \int_{\Hh}
    \|V(x)\|_{\Q^{-1}}^2\,\mu(\dd x).
\]
The first identity holds for \(\mu\)-almost every \(x\).
The corresponding Gaussian-reference probability current is
\[
    J_\mu=\rho F-\frac12\Q \nabla\rho.
\]
It satisfies \(J_\mu=\rho V/2\), while the stationary
Fokker--Planck identity \eqref{eq:FP} gives \(J_\mu\in\Dom(\divnu)\) and
\(\divnu J_\mu=0\) in \(L^2(\nu)\).

Within this framework, vanishing entropy production is equivalent to
\(V=0\), to \(J_\mu=0\), and to the Gaussian-reference gradient condition
\[
    F=\frac12\Q \nabla\log\rho.
\]
These conditions are further equivalent to self-adjointness of the closed
generator in \(L^2(\mu)\), detailed balance of the Markov semigroup, and
invariance of the stationary path law under time reversal in Theorem \ref{thm:reversibility-criteria}. The equivalence
between generator symmetry, detailed balance, and pathwise reversibility is
standard for symmetric Markov processes~\cite[Theorem 4.3.3]{jiang2004}; our
contribution is to identify these properties with the explicit Gaussian
current and Cameron--Martin energy above.

The proof adapts the finite-dimensional path-reversal strategy
of~\cite{QW99} to the Gaussian reference setting. An infinite-dimensional
Girsanov transform first removes the nonlinear drift. Detailed balance for
the Ornstein--Uhlenbeck semigroup then yields a weighted endpoint identity
under time reversal. The remaining correction is computed using the closed
Gaussian divergence. Because an \(\Hh\)-valued strong solution is assumed, the
reversal of the stochastic line integral can be obtained directly from
left- and right-point Riemann sums and their trace covariation; no
Galerkin-level Stratonovich integral is introduced. These ingredients give
the finite-time Radon--Nikodym derivative of the forward law with respect to
the reversed law, from which the local and total entropy-production formulas
follow.

The framework is deliberately more specialized than the general
infinite-dimensional time-reversal theories
in~\cite{FollmerWakolbinger1986,MilletNualartSanz1989}: it requires a reversible
Gaussian Ornstein--Uhlenbeck reference process, a nonlinear perturbation in
the noise range, and an \(\Hh\)-valued strong solution with sufficient
differentiability. In return, it provides an explicit Gaussian-reference
current and a Cameron--Martin energy identity while allowing \(A\) to be
unbounded and the
noise covariance to have no bounded inverse. The results do not claim to
construct a strong reversed SPDE in full generality, nor do they treat the
singular forward--backward regime outside the noise range.

For finite-dimensional diffusions, Qian and Wang related entropy production
to circulation and to the squared norm of the irreversible drift on a
Riemannian manifold~\cite{QW99}. Lebowitz and Spohn identified the
corresponding forward--backward path action as the quantity entering a
Gallavotti--Cohen-type fluctuation symmetry~\cite{LebowitzSpohn1999}.
More recently, Da Costa and Pavliotis treated stationary diffusions with
possibly degenerate noise. Their results show that equivalence of the
forward and reversed path laws is governed by a noise-range condition; in
the regular branch the entropy production is expressed through a
Moore--Penrose inverse, whereas failure of the range condition can produce
mutually singular path laws and infinite entropy production~\cite{DaCostaPavliotis2023}.

A complementary line of research comes from stochastic thermodynamics.
Seifert formulated total entropy production along individual Langevin and
Markov-jump trajectories and derived an integral fluctuation
theorem~\cite{Seifert2005}. Chetrite and Gaw\k{e}dzki subsequently developed a
unified treatment of fluctuation relations for finite-dimensional diffusion
processes by comparing the original and time-reversed
dynamics~\cite{ChetriteGawedzki2008}. The same path-space asymmetry also has an
inferential interpretation: Rold\'an and Parrondo showed that the relative
entropy rate between a stationary time series and its time reversal can be
used to estimate dissipation, whereas coarse-grained or partial observations
generally yield only a lower bound~\cite{RoldanParrondo2010}.

For continuous stochastic dynamics in full phase space, physical time
reversal must also account for the parity of each coordinate: positions are
even, whereas momenta are odd. Building on the excess--housekeeping and
nonadiabatic--adiabatic decompositions
in~\cite{EspositoVanDenBroeck2010FT,  HatanoSasa2001, JackKaiserZimmer2017,  SpeckSeifert2005,
VanDenBroeckEsposito2010II}, Spinney and Ford decomposed the drift and
probability current into reversible and irreversible parts and expressed
the mean total entropy-production rate as a nonnegative,
diffusion-weighted quadratic functional of the irreversible
current~\cite{SpinneyFordOddEven2012,SpinneyFord2012}. They further wrote the
trajectory entropy production as
\(\Delta S_{\mathrm{tot}}=\Delta S_1+\Delta S_2+\Delta S_3\): the first
two contributions obey integral fluctuation theorems and have nonnegative
means, whereas the third can arise in the presence of odd variables and a
parity-asymmetric stationary density, need not obey an integral fluctuation
theorem, and may have either sign in the mean~\cite{SpinneyFord2012}. The
present paper uses the
ordinary identity-parity path reversal, so these full-phase-space results
serve as motivation rather than as a direct finite-dimensional case of our
theory.

Entropy production has also been formulated directly for stochastic density
fields of interacting particles. Starting from Dean's exact density
equation~\cite{Dean1996}, Brossollet and Biroli used Onsager--Machlup path
weights to derive trajectory-wise and steady-state entropy production for
Dean-type and coarse-grained density theories~\cite{BrossolletBiroli2026}.
For pair interactions, the stationary rate reduces to a formula involving stationary
one- and two-point density correlations and agrees with particle-level and
Doi--Peliti calculations; the latter provide a complementary microscopic
field representation~\cite{PruessnerGarciaMillan2025}. Related active-field
work resolves irreversibility in real and Fourier space through
forward--backward field-path probabilities~\cite{NardiniEtAl2017}. These
studies also emphasize that multiplicative noise, discretization, and
spurious-drift terms must be treated consistently~\cite{CatesEtAl2022}.
Since Dean/DDFT dynamics has conserved, state-dependent noise, it is
structurally related to, but not covered by, the additive Gaussian-reference
framework studied here.

A related development concerns universal restrictions imposed by entropy
production on current fluctuations. Barato and Seifert proposed the
thermodynamic uncertainty relation for steady-state Markov networks, which
expresses a trade-off between current precision and thermodynamic
cost~\cite{BaratoSeifert2015}. Gingrich, Horowitz, Perunov, and England then
proved a quadratic dissipation bound for the current large-deviation
function of finite-state continuous-time Markov jump processes, from which
the long-time uncertainty relation follows~\cite{GingrichEtAl2016}. These
results provide important motivation for the present path-law formulation
and current-based interpretation, but they do not by themselves yield the
infinite-dimensional Gaussian-reference identity established here.

Time reversal itself has a broader literature. The reverse drift of a
finite-dimensional diffusion was characterized under regularity assumptions
by Haussmann and Pardoux~\cite{HaussmannPardoux1986}. Infinite-dimensional
diffusion reversal was subsequently studied under local finite-entropy
conditions by F\"ollmer and Wakolbinger~\cite{FollmerWakolbinger1986},
while Millet, Nualart, and Sanz used techniques from the stochastic calculus
of variations~\cite{MilletNualartSanz1989}. These
works provide essential reverse-process theory, but their objectives and
hypotheses differ from the explicit Gaussian-reference entropy-production
identity developed below.

The analytic theory needed in infinite dimensions is well developed in
several complementary directions. Stochastic evolution equations,
Hilbert-space Girsanov transformations, and invariant measures are treated
systematically in~\cite{DZ92,DZ96}. Symmetry and generators of
Hilbert-space Ornstein--Uhlenbeck semigroups were characterized by
Chojnowska-Michalik and Goldys~\cite{MG02}; Gaussian Sobolev spaces,
integration by parts, and closed divergence operators are developed
in~\cite{Bogachev1998,Bogachev2018}. The general relation between symmetric
Markov semigroups, detailed balance, and reversible stationary processes is
part of Dirichlet-form theory~\cite{FukushimaOshimaTakeda}. Girsanov
densities and trace corrections also occur in the Onsager--Machlup analysis
of stochastic evolution equations~\cite{BardinaRoviraTindel2003}, but that
small-tube problem is different from the forward--backward path-law entropy
considered here.

The paper is organized as follows. Section~\ref{Sec2} introduces the Gaussian
reference framework, states the entropy-production and reversibility
theorems, and gives a diagonal strong-solution example. Section~\ref{Sec3} proves the
Girsanov, Ornstein--Uhlenbeck time-reversal, Gaussian-divergence, path-law,
entropy-production, and reversibility results. The final section summarizes
the conclusions.

\section{State space, assumptions, and main results}\label{Sec2}

Let \(\Hh\) and \(\Uh\) be real separable Hilbert spaces. The state-space
inner product and norm in \(\Hh\) are denoted by
\(\inner{\cdot}{\cdot}\) and \(\|\cdot\|_{\Hh}\), respectively. We write
\(\mathcal L(\Hh)\) for the bounded operators on \(\Hh\),
\(\mathcal L_1(\Hh)\) for the trace-class operators on \(\Hh\), and
\(\mathcal L_2(\Uh,\Hh)\) for the Hilbert--Schmidt operators from \(\Uh\)
to \(\Hh\).

On a filtered probability space \((\Omega,\mathcal F,(\mathcal F_t)_{t\geq0},\Pp)\), \(W\) is a cylindrical Wiener process on
\(\Uh\): for \(u,v\in\Uh\),
\[
    \E[W_t(u)W_s(v)]
    =(t\wedge s)\langle u,v\rangle_{\Uh}.
\]
Equivalently, \(W_t=\sum_k\beta_k(t)f_k\) formally, where
\(\{f_k\}\) is an orthonormal basis of \(\Uh\) and the \(\beta_k\) are
independent standard Brownian motions. If
\(B\in\mathcal L_2(\Uh,\Hh)\), then \(BW\) is an \(\Hh\)-valued Wiener
process with covariance \(\Q:=BB^*\in\mathcal L_1(\Hh)\).
We use the standard Hilbert-space stochastic-integration conventions for
cylindrical Wiener noise; see~\cite{DZ92}.

For a probability measure \(\lambda\) on \(\Hh\) and \(1\leq p<\infty\),
\(L^p(\lambda)\) denotes the usual space of measurable functions satisfying
\[
    \|f\|_{L^p(\lambda)}^p
    :=\int_{\Hh}|f(x)|^p\,\lambda(\dd x)<\infty.
\]
In particular,
\[
    \langle f,g\rangle_{L^2(\lambda)}
    :=\int_{\Hh}f(x)g(x)\,\lambda(\dd x).
\]
We use \(\lambda=\nu\) and \(\lambda=\mu=\rho\nu\) below; the notation
\(L^p(\lambda;E)\) has its standard Bochner-space meaning.

\subsection{Analytic assumptions}

We begin by fixing the linear Gaussian reference dynamics and its covariance.

\begin{assumption}[Reference dynamics]\label{ass:AFQ}
The following conditions hold.
\begin{enumerate}[label=\textup{(\roman*)}]
\item The operator \(A:\Dom(A)\subset\Hh\to\Hh\) generates a strongly continuous
semigroup \(S(t)\) of negative type: there are \(M\geq1\) and \(\omega>0\)
such that
\[
    \|S(t)\|_{\mathcal L(\Hh)}\leq Me^{-\omega t},\quad t\geq 0.
\]
\item The cylindrical Wiener process \(W\) is on \(\Uh\),
the operator \(B\in\mathcal L_2(\Uh,\Hh)\), and
\[
    \int_0^\infty \Tr\left[S(r)\Q S^*(r)\right]\,\dd r<\infty,
\]
where \(\Q:=BB^*\).
\end{enumerate}
\end{assumption}

Under Assumption~\ref{ass:AFQ}, the Ornstein--Uhlenbeck process has the centered Gaussian
invariant measure
\[
    \nu=\mathcal N(0,Q_\infty), \quad \text{with}~Q_\infty:=\int_0^\infty S(r)\Q S^*(r)\,\dd r,
\]
characterized by
\[
    \int_{\Hh}e^{i\inner{x}{h}}\,\nu(\dd x)
    =\exp\left(-\frac12\inner{Q_\infty h}{h}\right),
    \qquad h\in\Hh;
\]
see~\cite[Theorem~6.2.1]{DZ96}. Here \(\Q\) and \(Q_\infty\) are symmetric, nonnegative, and trace class.

More generally, a standard sufficient condition for existence of an
invariant Gaussian measure for the stable linear equation \eqref{eq:ou-reference} is
\[
    \int_0^\infty
    \|S(r)B\|_{\mathcal L_2(\Uh,\Hh)}^2\,\dd r<\infty.
\]
The exponential stability and Hilbert--Schmidt hypotheses above are
convenient sufficient conditions for this
requirement~\cite{Bogachev2018,DZ92}. 

Entropy production is measured only along directions accessible to the
noise; the following space and norm encode this restriction.

\begin{definition}[Cameron--Martin noise space and energy norm]
\label{def:CM-space}
Let \(B^\dagger\) denote the minimal-norm inverse of \(B\) on \(\Ran(B)\),
that is, the restriction of the Moore--Penrose inverse to \(\Ran(B)\) . The Cameron--Martin noise space is
\[
    \Hh_{\Q}
    :=
    \Ran(\Q^{1/2})
    =
    \Ran(B).
\]
For \(v\in\Hh_{\Q}\), define
\begin{equation}\label{eq:noise-energy-norm}
    \|v\|_{\Q^{-1}}
    :=
    \|B^\dagger v\|_{\Uh}
    =
    \inf\bigl\{\|u\|_{\Uh}:Bu=v\bigr\}.
\end{equation}
Equivalently, this is
\(\|\Q^{\dagger/2}v\|_{\Hh}\), where
\(\Q^{\dagger/2}\) is the generalized inverse of \(\Q^{1/2}\).
\end{definition}
Thus the notation \(\|\cdot\|_{\Q^{-1}}\) denotes the noise energy norm.
Equipped with this norm, \(\Hh_{\Q}\) is a Hilbert space isometric to
\((\ker B)^\perp\). The pseudoinverse norm identity follows from the polar
decomposition of \(B\), the operator-range identity follows from Douglas'
range theorem~\cite{Douglas1966}, and the minimum-control property follows by
orthogonal projection onto \((\ker B)^\perp\).

The qualifier ``noise'' is important: \(\Hh_{\Q}\) is the Cameron--Martin
space of a noise increment with covariance \(\Q\), whereas the
Cameron--Martin space of the invariant measure \(\nu\) is
\(\Ran(Q_\infty^{1/2})\); the two spaces need not coincide~\cite{Bogachev1998}. In finite dimensions,
\[
    \|v\|_{\Q^{-1}}^2=v^{\mathsf T}\Q^\dagger v,
    \qquad v\in\Ran(B),
\]
so the range formulation and minimum-energy norm have the same
pseudoinverse structure as in the theory of degenerate stationary
diffusions~\cite{DaCostaPavliotis2023}.

Let \((R_t)_{t\geq0}\) be the Ornstein--Uhlenbeck semigroup associated with~\eqref{eq:ou-reference},
\[
    R_t\varphi(x):=\mathbb E[\varphi(Z_t^x)],
\]
and let \((\Lzero,\Dom(\Lzero))\) be its closed generator on \(L^2(\nu)\).
As a concrete sufficient class, let \(\varphi\in C_b^2(\Hh)\) satisfy
the conditions below, where \(\nabla\varphi\) and \(\nabla^2\varphi\) denote the
classical first and second Fr\'echet derivatives of \(\varphi\):
\[
    \nabla\varphi(\Hh)\subset\Dom(A^*),
    \qquad
    A^*\nabla\varphi\in C_b(\Hh;\Hh),
    \qquad
    \Q \nabla^2\varphi\in C_b(\Hh;\mathcal L_1(\Hh)).
\]
Then \(\varphi\in\Dom(\Lzero)\), and its
generator is represented \(\nu\)-almost everywhere by
\begin{equation}\label{eq:L0-expression}
    \Lzero\varphi(x)
    =
    \inner{x}{A^*\nabla\varphi(x)}
    +
    \frac12\Tr\!\left(\Q \nabla^2\varphi(x)\right).
\end{equation}
If \(x\in\Dom(A)\), the first term equals
\(\inner{Ax}{\nabla\varphi(x)}\).

The time-reversal argument also requires the Gaussian reference dynamics to
be reversible.

\begin{assumption}[Reversible Gaussian reference]
\label{ass:OU}
The Ornstein--Uhlenbeck semigroup \((R_t)_{t\geq0}\) is symmetric on \(L^2(\nu)\);
equivalently, \(\Lzero\) is self-adjoint and the reference Ornstein--Uhlenbeck process is
reversible with respect to \(\nu\).
\end{assumption}
\begin{remark}\label{rem:OU-symmetry}
The Ornstein--Uhlenbeck semigroup is symmetric in \(L^2(\nu)\) if and only if
\(\Q h\in\Dom(A)\) and \(A\Q h=\Q A^*h\) for every
\(h\in\Dom(A^*)\); equivalently,
\(S(t)\Q=\Q S^*(t)\) for every \(t\geq0\), see~\cite[Theorem 2.4]{MG02}.
Thus symmetry is strictly stronger than invariance: the Gaussian invariant
measure may exist even when the Ornstein--Uhlenbeck dynamics is not reversible.
\end{remark}

To formulate weak derivatives of densities relative to \(\nu\), we use the
closed Gaussian gradient.

\begin{definition}[Gaussian Sobolev space]
\label{def:gaussian-sobolev}
Let \(\nabla\) denote the closed Gaussian gradient
\[
    \nabla:\Dom(\nabla)\subset L^2(\nu)
    \longrightarrow L^2(\nu;\Hh)
\]
associated with the Gaussian measure \(\nu\). We set
\[
    W^{1,2}(\Hh,\nu):=\Dom(\nabla),
\]
equipped with
\[
    \|u\|_{W^{1,2}(\nu)}^2
    :=\|u\|_{L^2(\nu)}^2
      +\int_{\Hh}\|\nabla u(x)\|_{\Hh}^2\,\nu(\dd x).
\]
All gradients of nonsmooth densities below are understood in this weak
Gaussian Sobolev sense.
\end{definition}

\begin{remark}[Classical and closed gradients]
On the regular class used in~\eqref{eq:L0-expression}, the closed Gaussian
gradient in Definition~\ref{def:gaussian-sobolev} agrees
\(\nu\)-almost everywhere with the classical Fr\'echet derivative. Thus,
when a regular function is viewed as an element of \(W^{1,2}(\Hh,\nu)\),
the two gradients represent the same element of \(L^2(\nu;\Hh)\), and we
use the symbol \(\nabla\) for both. By contrast, \(\nabla^2\varphi\) in
\eqref{eq:L0-expression} is the classical second Fr\'echet derivative; it
does not denote an iteration of the closed gradient.
\end{remark}

The associated integration-by-parts operator is the closed Gaussian
divergence defined next.

\begin{definition}[Gaussian divergence]
\label{def:gaussian-divergence}
The Gaussian divergence is the negative adjoint of the closed gradient.
More precisely, \(Y\in L^2(\nu;\Hh)\) belongs to
\(\Dom(\divnu)\) if there exists \(g\in L^2(\nu)\) such that
\[
    \int_{\Hh}\inner{Y}{\nabla\varphi}\,\dd\nu
    =
    -\int_{\Hh}\varphi g\,\dd\nu,
    \qquad \varphi\in W^{1,2}(\Hh,\nu).
\]
The function \(g\), which is unique in \(L^2(\nu)\), is denoted by
\(\divnu Y\).
\end{definition}

The closed gradient and its adjoint divergence are standard objects of
Gaussian Sobolev analysis; see~\cite[Chapter~5]{Bogachev1998}
and~\cite{Bogachev2018}. In finite dimensions, if
\(\nu(\dd x)=\gamma(x)\,\dd x\) and \(Y\) is smooth, then
\[
    \divnu Y
    =\operatorname{div}Y+\inner{Y}{\nabla\log\gamma},
\]
which explains how \(\divnu\) replaces the Euclidean divergence when the
reference measure is Gaussian.

Under Assumption~\ref{ass:OU}, symmetry of the Ornstein--Uhlenbeck
semigroup identifies the closed generator with the Gaussian divergence
form. More precisely, for every
\(u\in\Dom(\Lzero)\cap W^{1,2}(\Hh,\nu)\), the closed Dirichlet-form
identity \cite[Proposition~2.47]{DP04}
\[
    \int_{\Hh}\varphi\,\Lzero u\,\dd\nu
    =-\frac12\int_{\Hh}\inner{\Q \nabla u}{\nabla\varphi}\,\dd\nu,
    \qquad \varphi\in W^{1,2}(\Hh,\nu),
\]
and Definition~\ref{def:gaussian-divergence} give
\[
    \Q \nabla u\in\Dom(\divnu),
    \qquad
    \divnu(\Q \nabla u)=2\Lzero u
    \quad\text{in }L^2(\nu).
\]
See~\cite{Bogachev2018, DP04} for this closed Dirichlet-form realization of
the symmetric Ornstein--Uhlenbeck generator.

We now relate the stationary law of the nonlinear equation to the Gaussian
reference measure.

\begin{assumption}[Density relative to the Gaussian reference]
\label{ass:density}
A probability measure \(\mu\) on \(\Hh\) is absolutely continuous with
respect to \(\nu\):
\[
    \mu(\dd x)=\rho(x)\nu(\dd x),
    \qquad \rho>0\quad\nu\text{-a.e.},
    \qquad \int_{\Hh}\rho\,\dd\nu=1.
\]
\end{assumption}

The following regularity conditions justify the Girsanov transform, the
Hilbert-space It\^o formula, and the pathwise reversal calculation.

\begin{assumption}[Drift, logarithmic density, and strong realization]
\label{ass:path-regularity}
The following conditions hold.
\begin{enumerate}[label=\textup{(\roman*)}]
\item
There exists \(\theta\in C_b^2(\Hh;\Hh)\) such that
\begin{equation}\label{eq:F-Qtheta}
    F=\Q\theta,
    \qquad
    \theta(\Hh)\subset\Dom(A^*),
    \qquad
    A^*\theta\in C_b(\Hh;\Hh).
\end{equation}

\item
The density in Assumption~\ref{ass:density} has a strictly positive Borel
representative, still denoted by \(\rho\), such that
\(\log\rho\in C_b^2(\Hh)\) and
\[
    (\nabla\log\rho)(\Hh)\subset\Dom(A^*),
    \qquad
    A^*\nabla\log\rho\in C_b(\Hh;\Hh).
\]
We use this representative throughout.

\item
Equation~\eqref{eq:intro-spde} admits a stationary realization \(X\), adapted
to a filtration \((\mathcal F_t)_{t\geq0}\) with respect to which \(W\) is a
cylindrical Wiener process, such that \(X_t\sim\mu\) for every \(t\geq0\).
For each \(T>0\),
\[
\begin{aligned}
    &X\in C([0,T];\Hh)\quad\text{a.s.},\\
    &X_r\in\Dom(A)
      \quad\text{for }\dd r\otimes\Pp\text{-a.e. }(r,\omega),
      \qquad
      \int_0^T\|AX_r\|_{\Hh}\,\dd r<\infty
      \quad\text{a.s.},
\end{aligned}
\]
and
\begin{equation}\label{eq:strong-solution-form}
    X_t
    =
    X_0+\int_0^t\bigl(AX_r+F(X_r)\bigr)\,\dd r
        +\int_0^t B\,\dd W_r,
    \qquad 0\leq t\leq T,
\end{equation}
holds in \(\Hh\) for all \(t\in[0,T]\) on a single event of probability one.
This is the meaning of ``strong solution'' used below.
\end{enumerate}
\end{assumption}

\begin{remark}[Density representative and its logarithm]
\label{rem:density-representative}
As the Radon--Nikodym derivative \(\dd\mu/\dd\nu\), the density \(\rho\) in
Assumption~\ref{ass:density} is initially defined only up to
\(\nu\)-almost-everywhere equality, that is, as an equivalence class in
\(L^1(\nu)\). Pointwise expressions involving \(\rho\) therefore require a
chosen representative. Assumption~\ref{ass:path-regularity}~(ii) fixes a
strictly positive Borel representative; throughout the paper,
\(\log(\rho)\) denotes its pointwise logarithm, with no separate symbol
introduced. This notation is used because density derivatives and
time-reversal terms take the natural forms
\(\nabla\rho=\rho \nabla\log\rho\) and
\(\log\rho(X_t)-\log\rho(X_s)\); the corresponding
closed-generator identities are established in
Lemma~\ref{lem:density-regularity}. Although two representatives agree at
each fixed time along the stationary process because \(X_t\sim\mu\ll\nu\),
fixing one Borel representative is necessary for the pathwise evaluations
and It\^o formulas used below.
\end{remark}

\begin{remark}[Girsanov admissibility]
\label{rem:girsanov-admissibility}
For the stationary realization in Assumption~\ref{ass:path-regularity}, set
\(G:=B^*\theta\). Then
\[
    F=\Q\theta=BG,
    \qquad
    G(\Hh)\subset(\ker B)^\perp,
\]
and \(G\) is bounded and globally Lipschitz. Likewise, \(F=\Q\theta\) is
bounded and globally Lipschitz, so standard semilinear well-posedness yields
a unique mild solution for every initial state and a Markov transition
semigroup; the realization above is the corresponding stationary Markov
process~\cite{DZ92,DZ96}. Since \(X\) is adapted with
continuous paths, \(G(X)\) is progressively measurable. Moreover, for every
finite interval \([s,t]\),
\[
    \E\exp\left\{
        \frac12\int_s^t\|G(X_r)\|_{\Uh}^2\,\dd r
    \right\}
    \leq
    \exp\left\{\frac{t-s}{2}\|G\|_\infty^2\right\}<\infty.
\]
Consequently, the noise-range, regularity, and Novikov conditions in
Lemma~\ref{lem:girsanov} hold automatically on every finite time interval;
no bounded inverse of \(B\) is required. In fact,
\(G=B^\dagger F\) is the unique minimal-norm control representing \(F\),
and
\[
    \|F(x)\|_{\Q^{-1}}=\|G(x)\|_{\Uh},
    \qquad x\in\Hh.
\]
Similarly, with
\[
    \Gamma:=B^*(2\theta-\nabla\log\rho),
    \qquad
    \widehat V:=2F-\Q \nabla\log\rho=B\Gamma,
\]
one has \(\Gamma\in C_b(\Hh;(\ker B)^\perp)\) and
\[
    \|\widehat V(x)\|_{\Q^{-1}}=\|\Gamma(x)\|_{\Uh},
    \qquad
    \widehat V\in L^2(\mu;\Hh_{\Q}).
\]
For degenerate finite-dimensional stationary diffusions, failure of the
corresponding noise-range condition can lead to mutually singular path laws
and infinite entropy production~\cite{DaCostaPavliotis2023}. This suggests an
analogous infinite-dimensional singular regime, which is not covered here.
\end{remark}

The next lemma turns the preceding pointwise regularity into the closed
generator and stationary Fokker--Planck identities used throughout the
proofs.

\begin{lemma}[Density regularity and stationary equation]
\label{lem:density-regularity}
Under Assumptions~\ref{ass:AFQ},~\ref{ass:OU},~\ref{ass:density}, and
\ref{ass:path-regularity}, both \(\log\rho\) and \(\rho\) belong to
\(\Dom(\Lzero)\cap W^{1,2}(\Hh,\nu)\). Moreover, with \(\Lzero\) denotes the generator of the Ornstein--Uhlenbeck process \eqref{eq:ou-reference} from \eqref{eq:L0-expression},
\begin{equation}\label{eq:L0-rho-chain-rule}
    \Lzero\rho
    =\rho\left[
        \Lzero(\log\rho)+\frac12\|B^*\nabla\log\rho\|_{\Uh}^2
      \right]
    \quad\text{in }L^2(\nu),
\end{equation}
and
\begin{equation}\label{eq:log-gradient-consistency}
    \nabla\log\rho=\frac{\nabla\rho}{\rho},
    \qquad \nu\text{-a.e.}.
\end{equation}
For \(0\leq s\leq t<\infty\), the stationary strong realization satisfies
\begin{equation}\label{eq:logrho-ito}
\begin{aligned}
    \log\rho(X_t)-\log\rho(X_s)
    &=\int_s^t
      \langle B^*\nabla\log\rho(X_r),\dd W_r\rangle_{\Uh} \\
    &\quad+\int_s^t
      \bigl[\Lzero(\log\rho)+\inner{F}{\nabla\log\rho}\bigr](X_r)\,\dd r
\end{aligned}
\end{equation}
almost surely. Finally,
\[
    \rho F\in\Dom(\divnu),
\]
and the stationary Fokker--Planck equation holds:
\begin{equation}\label{eq:FP}
    \Lzero\rho-\divnu(\rho F)=0
    \qquad\text{in }L^2(\nu).
\end{equation}
\end{lemma}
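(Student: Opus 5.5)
We would organize the proof in four steps, each tied to an earlier fact in the paper.

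\textbf{Step 1: \(\log\rho\) and \(\rho\) lie in \(\Dom(\Lzero)\cap W^{1,2}(\Hh,\nu)\).} By Assumption~\ref{ass:path-regularity}~(ii), \(\psi:=\log\rho\in C_b^2(\Hh)\) with \(\nabla\psi(\Hh)\subset\Dom(A^*)\) and \(A^*\nabla\psi\in C_b\). To place \(\psi\) in the regular class of \eqref{eq:L0-expression}, we still need \(\Q\nabla^2\psi\in C_b(\Hh;\mathcal L_1(\Hh))\). This holds because \(\Q\) is trace class and \(\nabla^2\psi\) is bounded; continuity in trace norm follows from \(\|\Q(K_1-K_2)\|_{\mathcal L_1}\le\Tr(\Q)\|K_1-K_2\|\). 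Hence \(\psi\in\Dom(\Lzero)\), and \(\Lzero\psi\) is given by \eqref{eq:L0-expression}. It is bounded, since the term \(\inner{x}{A^*\nabla\psi}\) is in \(L^2(\nu)\) by Gaussian moments. Bounded \(C^1\) functions with bounded gradient lie in \(W^{1,2}\).

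Since \(\psi\) is bounded, \(\rho=e^{\psi}\) is bounded above and away from zero. It belongs to \(C_b^2\) with
\[
\nabla\rho=\rho\nabla\psi,\qquad \nabla^2\rho=\rho(\nabla^2\psi+\nabla\psi\otimes\nabla\psi).
\]
Because \(\Dom(A^*)\) is a linear space, \(\nabla\rho(x)\in\Dom(A^*)\) and \(A^*\nabla\rho=\rho A^*\nabla\psi\in C_b\). The same trace argument applies, with \(\Q(\nabla\psi\otimes\nabla\psi)\) of rank one. So \(\rho\) is also in the regular class, and \eqref{eq:log-gradient-consistency} is the classical chain rule. The closed gradient coincides with the classical one by the remark on classical and closed gradients.

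\textbf{Step 2: the identity \eqref{eq:L0-rho-chain-rule}.} Substitute the derivatives of \(\rho\) into \eqref{eq:L0-expression}. This gives
\[
\Lzero\rho=\rho\bigl[\Lzero\psi+\tfrac12\Tr(\Q\nabla\psi\otimes\nabla\psi)\bigr],
\]
and \(\Tr(\Q\,h\otimes h)=\inner{\Q h}{h}=\|B^*h\|_{\Uh}^2\).

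\textbf{Step 3: the It\^o formula \eqref{eq:logrho-ito}.} Apply the Hilbert-space It\^o formula to \(\psi(X_t)\) for the strong solution \eqref{eq:strong-solution-form}. This is legitimate since \(\psi\in C_b^2\), \(X_r\in\Dom(A)\) for a.e.\ \(r\), and \(\int\|AX_r\|\,\dd r<\infty\). The drift is
\[
\inner{AX_r+F(X_r)}{\nabla\psi(X_r)}+\tfrac12\Tr(\Q\nabla^2\psi(X_r)),
\]
and the martingale part is \(\langle B^*\nabla\psi(X_r),\dd W_r\rangle_{\Uh}\). Using \(\inner{Ax}{\nabla\psi}=\inner{x}{A^*\nabla\psi}\) on \(\Dom(A)\), the drift becomes \(\Lzero\psi+\inner{F}{\nabla\psi}\).

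The main subtlety is that \(\Lzero\psi\) is only a \(\nu\)-a.e.\ object, whereas we need the pointwise formula along paths. We therefore \emph{define} it pointwise through \eqref{eq:L0-expression}, which is meaningful for every \(x\). The unbounded-\(A\) issue in It\^o's formula we handle by invoking the standard It\^o formula for strong solutions, e.g.\ \cite{DZ92}. Alternatively one can apply the formula to Yosida approximations and pass to the limit, using the \(\Dom(A^*)\) regularity of \(\nabla\psi\).

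\textbf{Step 4: \(\rho F\in\Dom(\divnu)\) and the Fokker--Planck equation \eqref{eq:FP}.} We expect this to be the main obstacle, since it is the only place where stationarity must be turned into a weak identity. First take \(\varphi\) in the regular class of \eqref{eq:L0-expression}. Applying It\^o's formula to \(\varphi(X_t)\) and taking expectations under stationarity, with the martingale term vanishing by boundedness, gives
\[
\int\bigl(\Lzero\varphi+\inner{F}{\nabla\varphi}\bigr)\rho\,\dd\nu=0 .
\]
By self-adjointness (Assumption~\ref{ass:OU}) we have \(\int\rho\Lzero\varphi\,\dd\nu=\int\varphi\Lzero\rho\,\dd\nu\). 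Hence
\[
\int\inner{\rho F}{\nabla\varphi}\,\dd\nu=-\int\varphi\,\Lzero\rho\,\dd\nu .
\]

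It remains to extend this to all \(\varphi\in W^{1,2}(\Hh,\nu)\). Both sides are continuous in the \(W^{1,2}\) norm, because \(\rho F\) is bounded and \(\Lzero\rho\in L^2(\nu)\). So it suffices that the regular class is dense in \(W^{1,2}(\Hh,\nu)\). For that we would use exponential or cylindrical functions \(\varphi(x)=f(\inner{x}{h_1},\dots,\inner{x}{h_n})\) with \(h_i\in\Dom(A^*)\) and \(f\in C_b^\infty\). These satisfy all three conditions, and \(\Dom(A^*)\) is dense, so the class is a core for the closed gradient \cite{Bogachev1998}. Definition~\ref{def:gaussian-divergence} then yields \(\rho F\in\Dom(\divnu)\) with \(\divnu(\rho F)=\Lzero\rho\), which is \eqref{eq:FP}.
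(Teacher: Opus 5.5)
Your proposal is correct and follows the paper's proof essentially step for step. It places \(\log\rho\) and \(\rho\) in the regular class of \eqref{eq:L0-expression} via the trace-class bound on \(\Q\nabla^2\), and obtains \eqref{eq:L0-rho-chain-rule} from the chain rule with \(\Tr(\Q(u\otimes u))=\|B^*u\|_{\Uh}^2\). It derives \eqref{eq:logrho-ito} from the Hilbert-space It\^o formula using the pointwise Borel representative of \(\Lzero(\log\rho)\). Finally, it gets \eqref{eq:FP} from It\^o's formula on \(\Dom(A^*)\)-cylindrical test functions, self-adjointness of \(\Lzero\), and \(W^{1,2}\)-density.

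There is one small slip in Step~1: \(\Lzero\psi\) is not bounded, since \(\inner{x}{A^*\nabla\psi(x)}\) grows linearly in \(x\). It is only in \(L^2(\nu)\), and integrable in time along continuous paths, which is all your later steps actually use.
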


\begin{proof}
Since \(\log\rho\) and \(\rho\), together with
their Fr\'echet derivatives, are bounded, both functions belong to
\(W^{1,2}(\Hh,\nu)\); the Gaussian Sobolev chain rule gives
\eqref{eq:log-gradient-consistency}.
Since \(\Q\in\mathcal L_1(\Hh)\) and \(\nabla^2(\log\rho)\) is bounded and
continuous, \(\Q \nabla^2(\log\rho)\in
C_b(\Hh;\mathcal L_1(\Hh))\). Hence~\eqref{eq:L0-expression} gives
\[
    [\Lzero(\log\rho)](x)
    =\inner{x}{A^*\nabla(\log\rho)(x)}
     +\frac12\Tr\!\left(\Q \nabla^2(\log\rho)(x)\right)
\]
for \(\nu\)-almost every \(x\). We use the Borel function on the right-hand
side as the representative of \(\Lzero(\log\rho)\). The Fr\'echet chain rule
gives
\[
    \nabla\rho=\rho \nabla\log\rho,
    \qquad
    \nabla^2\rho=\rho\bigl(\nabla^2(\log\rho)
      +\nabla\log\rho\otimes \nabla\log\rho\bigr),
    \qquad
    A^*\nabla\rho=\rho A^*\nabla\log\rho.
\]
Thus \(\rho\) satisfies the same generator criterion. Applying
\eqref{eq:L0-expression} and using
\(\Tr(\Q(u\otimes u))=\|B^*u\|_{\Uh}^2\) proves
\eqref{eq:L0-rho-chain-rule}. 

For \(x\in\Dom(A)\), operator duality and trace cyclicity give
\[
    \inner{Ax}{\nabla\log\rho(x)}=\inner{x}{A^*\nabla\log\rho(x)},
    \qquad
    \Tr_{\Uh}\!\left(B^*\nabla^2(\log\rho)(x)B\right)
    =\Tr_{\Hh}\!\left(\Q \nabla^2(\log\rho)(x)\right).
\]
The Hilbert-space It\^o formula applied to
\eqref{eq:strong-solution-form} therefore yields~\eqref{eq:logrho-ito};
see~\cite[Chapter~4]{DZ92}.
Its stochastic integral is square integrable because \(B^*\nabla\log\rho\) is
bounded; its drift is integrable by stationarity because \(\rho\) is
bounded, \(\Lzero(\log\rho)\in L^2(\nu)\), and \(F,\nabla\log\rho\) are bounded.

It remains to prove the stationary equation \eqref{eq:FP}. Let \(\varphi\) be a
bounded \(C^2\)-cylindrical function, with bounded derivatives, generated
by directions in \(\Dom(A^*)\). Its It\^o drift is integrable: the identity
\(\inner{AX_r}{\nabla\varphi(X_r)}=\inner{X_r}{A^*\nabla\varphi(X_r)}\), the boundedness
of \(A^*\nabla\varphi\), and \(X_r\sim\rho\nu\) give integrability of the linear
term, while the remaining terms are bounded. Applying It\^o's formula to
\(\varphi(X_t)\), taking expectations, and using stationarity therefore gives
\[
    \int_{\Hh}
    \bigl[\Lzero\varphi+\inner{F}{\nabla\varphi}\bigr]\rho\,\dd\nu=0.
\]
Since \(\Lzero\) is self-adjoint and \(\rho\in\Dom(\Lzero)\),
\begin{equation}\label{eq:FP-weak}
    \int_{\Hh}\inner{\rho F}{\nabla\varphi}\,\dd\nu
    =-\int_{\Hh}\varphi\,\Lzero\rho\,\dd\nu.
\end{equation}
By cylindrical approximation (with directions in the dense subspace
\(\Dom(A^*)\)) and scalar truncation, such test functions are dense in
\(W^{1,2}(\Hh,\nu)\); see~\cite[Chapter~5]{Bogachev1998}. Both sides \eqref{eq:FP-weak} are
continuous in the
\(W^{1,2}\)-norm because
\(\rho F\in L^2(\nu;\Hh)\) and \(\Lzero\rho\in L^2(\nu)\). The identity
therefore extends to every \(\varphi\in W^{1,2}(\Hh,\nu)\). By
Definition~\ref{def:gaussian-divergence},
\(\rho F\in\Dom(\divnu)\) and
\(\divnu(\rho F)=\Lzero\rho\), which proves~\eqref{eq:FP}.
\end{proof}

These assumptions are stronger than those required for mild well-posedness,
but permit the Hilbert-space It\^o formula and the direct
strong-semimartingale time-reversal calculation; see~\cite{DZ92}. In
particular, \(F=\Q\theta\) provides the noise-range factorization and trace
regularity used below. Since \(\Q\) is trace class and \(\nabla\theta\) is
bounded and continuous, \(\Q \nabla\theta\) is trace class with uniformly
bounded, continuous trace. Compare the distinct Onsager--Machlup small-tube
setting in~\cite{BardinaRoviraTindel2003}.

The next proposition gives a directly verifiable diagonal criterion for the
strong-solution part of the framework. Its stochastic-convolution estimate is
a strengthened form of the standard regularity criteria for linear stochastic
evolution equations~\cite{DZ92}.

\begin{proposition}[Diagonal strong-solution criterion]
\label{prop:diagonal-strong-framework}
Let \(\Uh=\Hh\), and suppose that \(A\) is self-adjoint and that \(A\) and
\(B\) are diagonal with respect to an orthonormal basis
\(\{e_k\}_{k\geq1}\):
\[
    Ae_k=-\lambda_ke_k,
    \qquad
    Be_k=b_ke_k.
\]
Assume that \(\lambda_k\geq\lambda_*>0\), \(b_k>0\), and
\begin{equation}\label{eq:diagonal-strong-summability}
    \sum_{k=1}^{\infty}\lambda_kb_k^2<\infty.
\end{equation}
Then \(B\) is Hilbert--Schmidt, \(\Q=B^2\) is injective and trace class,
and the Ornstein--Uhlenbeck equation \eqref{eq:ou-reference} has the reversible invariant measure
\(\nu=\mathcal N(0,Q_\infty)\), where
\begin{equation}\label{invariant-covariance}
    Q_\infty e_k=\frac{b_k^2}{2\lambda_k}e_k.
\end{equation}
Its stationary realization is a continuous strong solution, and
\[
    \int_{\Hh}\|Ax\|_{\Hh}^2\,\nu(\dd x)
    =
    \frac12\sum_{k=1}^{\infty}\lambda_kb_k^2<\infty.
\]

If \(\theta\in C_b^2(\Hh;\Hh)\) and \(F=\Q\theta\), then the corresponding
nonlinear equation \eqref{eq:intro-spde} has a unique mild solution for every initial state. The
solution is strong on \([0,T]\) for \(x\in\Dom(A)\), and on
\([\varepsilon,T]\) for arbitrary \(x\in\Hh\) and \(\varepsilon>0\).
Consequently, every invariant measure is supported on \(\Dom(A)\), and every
stationary realization is a continuous strong solution.
\end{proposition}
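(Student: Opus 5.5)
The proof works coordinatewise in the basis $\{e_k\}$, where everything decouples. It proceeds in three stages: the linear Ornstein--Uhlenbeck facts, strong regularity of the stochastic convolution, and the nonlinear equation.

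\textbf{Linear facts.} Since $\lambda_k\geq\lambda_*$, the summability condition \eqref{eq:diagonal-strong-summability} gives
\[
    \sum_k b_k^2\leq\lambda_*^{-1}\sum_k\lambda_kb_k^2<\infty.
\]
Hence $B$ is Hilbert--Schmidt, and $\Q=B^2$ is trace class and injective because $b_k>0$. The semigroup is $S(t)e_k=e^{-\lambda_kt}e_k$, so $\|S(t)\|\leq e^{-\lambda_*t}$. Moreover
\[
    \int_0^\infty\Tr[S(r)\Q S^*(r)]\,\dd r=\sum_k \frac{b_k^2}{2\lambda_k}<\infty,
\]
which verifies Assumption~\ref{ass:AFQ} and yields \eqref{invariant-covariance}. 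Since $S(t)$ and $\Q$ are both diagonal, they commute and are self-adjoint, so $S(t)\Q=\Q S^*(t)$. By Remark~\ref{rem:OU-symmetry} this gives reversibility. Finally,
\[
    \int\|Ax\|^2\,\dd\nu=\sum_k\lambda_k^2\frac{b_k^2}{2\lambda_k}=\frac12\sum_k\lambda_kb_k^2 .
\]

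\textbf{Strong regularity of the stochastic convolution.} Write
\[
    W_A(t)=\int_0^tS(t-r)B\,\dd W_r=\sum_k b_k\int_0^te^{-\lambda_k(t-r)}\dd\beta_k(r)\,e_k .
\]
I aim to show that $AW_A$ is a continuous $\Hh$-valued process. The second moment is
\[
    \E\|AW_A(t)\|^2=\sum_k\lambda_k^2b_k^2\,\frac{1-e^{-2\lambda_kt}}{2\lambda_k}\leq\frac12\sum_k\lambda_kb_k^2 .
\]
Each coordinate is a one-dimensional Ornstein--Uhlenbeck process $\eta_k$ with $\dd\eta_k=-\lambda_k\eta_k\,\dd t+b_k\,\dd\beta_k$. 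The identity $\eta_k(t)=b_k\beta_k(t)-\lambda_k\int_0^t\eta_k\,\dd r$ converts weak continuity into strong continuity.

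For pathwise continuity of $AW_A$, I would use a factorization argument in a smaller space, or more simply the following. Apply Itô's formula to $\lambda_k\eta_k^2$:
\[
    \dd(\lambda_k\eta_k^2)=-2\lambda_k^2\eta_k^2\,\dd t+\lambda_kb_k^2\,\dd t+2\lambda_kb_k\eta_k\,\dd\beta_k .
\]
Summing over $k$ and applying the Burkholder--Davis--Gundy inequality controls $\E\sup_t\|(-A)^{1/2}W_A\|^2$ and $\E\int_0^T\|AW_A\|^2\,\dd t$, both by $\sum_k\lambda_kb_k^2$. This yields $W_A\in C([0,T];\Hh)$ and $AW_A\in L^2(0,T;\Hh)$. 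That is exactly what Assumption~\ref{ass:path-regularity}(iii) asks for: $X_r\in\Dom(A)$ for a.e.\ $r$ and $\int_0^T\|AX_r\|\,\dd r<\infty$, not continuity of $AX$. The strong form \eqref{eq:strong-solution-form} then follows coordinatewise, with $L^2$ convergence of the series of drift integrals. For the stationary OU realization, the same conclusion follows from stationarity together with $\int\|Ax\|^2\,\dd\nu<\infty$. This step is the main obstacle, because it requires choosing the right weighted energy estimate.

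\textbf{Nonlinear equation.} The nonlinearity $F=\Q\theta$ is bounded and Lipschitz, so a contraction argument gives a unique mild solution for every initial state. Write
\[
    X_t=S(t)x+\int_0^tS(t-r)F(X_r)\,\dd r+W_A(t).
\]
The three terms are handled as follows.
\begin{itemize}
\item \emph{Initial datum.} For $x\in\Dom(A)$, $S(\cdot)x$ is a classical solution. For general $x$, analyticity of the self-adjoint semigroup gives $\|AS(t)x\|\leq (et)^{-1}\|x\|$, so the term is classical on $[\varepsilon,T]$.
\item \emph{Convolution term.} Since $F$ is bounded and $\|AS(t-r)\Q\|\leq\sup_k\lambda_kb_k^2e^{-\lambda_k(t-r)}$, I would use
\[
    \|AS(s)\Q\|\leq \Bigl(\sup_k\lambda_kb_k^2\Bigr)<\infty .
\]
This makes the convolution lie in $\Dom(A)$ with a bounded $A$-norm and gives the strong form directly, without Hölder regularity.
\item \emph{Stochastic term.} This is handled by the previous stage.
\end{itemize}
Combining the three terms, the mild solution is strong on $[0,T]$ for $x\in\Dom(A)$ and on $[\varepsilon,T]$ in general.

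\textbf{Consequences.} If $\pi$ is invariant and the process starts from $\pi$, then $X_\varepsilon\in\Dom(A)$ almost surely and $X_\varepsilon\sim\pi$. Hence $\pi(\Dom(A))=1$. A stationary realization starting in $\Dom(A)$ is therefore strong on every $[0,T]$ and continuous, by the previous stage.
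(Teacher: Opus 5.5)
Your proposal is correct and follows essentially the same route as the paper: the Hilbert--Schmidt bound from the summability condition, the modewise computation of \(Q_\infty\), symmetry and the \(A\)-moment, \(AW_A\in L^2(\Omega\times(0,T);\Hh)\) together with \(\Hh\)-continuity of \(W_A\) for the strong Ornstein--Uhlenbeck solution, boundedness of \(A\Q\) plus closedness of \(A\) for the deterministic convolution (you use the operator-norm bound \(\sup_k\lambda_kb_k^2\), the paper the trace-norm bound), analytic smoothing on \([\varepsilon,T]\), and \(P_t(x,\Dom(A))=1\) for the support of invariant measures. Your It\^o/BDG weighted-energy estimate is only a hands-on substitute for the standard \(\Hh\)-continuity of \(W_A\) that the paper cites, since the It\^o-isometry bound you already wrote gives the needed time integrability of \(AW_A\); you were also right to drop the initial aim of continuity of \(AW_A\), which is not needed and is not guaranteed by the summability condition alone.
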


\begin{proof}
The summability condition gives
\[
    \|B\|_{\mathcal L_2(\Hh,\Hh)}^2
    =\sum_{k=1}^{\infty}b_k^2
    \leq
    \frac1{\lambda_*}
    \sum_{k=1}^{\infty}\lambda_kb_k^2<\infty.
\]
Thus \(B\) is Hilbert--Schmidt; moreover, \(b_k>0\) and
\(\Tr(\Q)=\sum_kb_k^2<\infty\) show that \(\Q=B^2\) is injective and trace
class. For the stochastic convolution
\[
    W_A(t):=\int_0^tS(t-r)B\,\dd W_r,
\]
It\^o's isometry gives
\[
    \E\|AW_A(t)\|_{\Hh}^2
    =
    \frac12\sum_{k=1}^{\infty}
    \lambda_kb_k^2\bigl(1-e^{-2\lambda_kt}\bigr),
\]
and consequently
\[
    \E\int_0^T\|AW_A(t)\|_{\Hh}^2\,\dd t
    \leq
    \frac T2\sum_{k=1}^{\infty}\lambda_kb_k^2<\infty.
\]
Thus \(W_A(t)\in\Dom(A)\) almost surely for every fixed \(t>0\), and
\(AW_A\in L^2(\Omega\times(0,T);\Hh)\). Together with the
\(\Hh\)-continuity of \(W_A\), this yields a continuous strong Ornstein--Uhlenbeck solution
for initial data in \(\Dom(A)\); see~\cite{DZ92} for the corresponding
stochastic-convolution regularity criteria.
Integrating each spectral mode gives the stated formula for \(Q_\infty\) and
\[
    \Tr(Q_\infty)
    =
    \frac12\sum_{k=1}^{\infty}\frac{b_k^2}{\lambda_k}
    \leq
    \frac1{2\lambda_*}\sum_{k=1}^{\infty}b_k^2<\infty.
\]
Since \(A\) and \(\Q\) are diagonal in the same basis,
\(S(t)\Q=\Q S(t)\); hence the Ornstein--Uhlenbeck semigroup is symmetric in \(L^2(\nu)\).
The same spectral calculation gives the displayed \(A\)-moment identity, so
the stationary Ornstein--Uhlenbeck realization is strong.

For the nonlinear equation \eqref{eq:intro-spde}, we observe that
\[
    A\Q e_k=-\lambda_kb_k^2e_k,
    \qquad
    \|A\Q\|_{\mathcal L_1(\Hh)}
    =\sum_{k=1}^{\infty}\lambda_kb_k^2<\infty.
\]
Hence \(\Q:\Hh\to\Dom(A)\) is continuous in the graph norm, and both
\(F=\Q\theta\) and \(AF=A\Q\theta\) are bounded and globally Lipschitz.
The standard fixed-point argument gives the unique mild solution. If
\(x\in\Dom(A)\), the variation-of-constants formula, the stochastic
convolution estimate above, and closedness of \(A\) yield the strong integral
equation on \([0,T]\). Analytic smoothing gives the same conclusion on
\([\varepsilon,T]\) for arbitrary \(x\in\Hh\); see~\cite{DZ92,EngelNagel2000}.
In particular,
\[
    P_t(x,\Dom(A))=1,
    \qquad t>0.
\]
If \(\mu\) is invariant, then
\[
    \mu(\Dom(A))
    =
    \int_{\Hh}P_t(x,\Dom(A))\,\mu(\dd x)=1,
\]
which proves the assertion about stationary strong solutions.
\end{proof}

The weaker condition
\[
    \sum_{k=1}^{\infty}\frac{b_k^2}{1+\lambda_k}<\infty,
\]
used for mild solutions in~\cite{BardinaRoviraTindel2003}, does not imply
that \(B\) is Hilbert--Schmidt and is therefore insufficient for the
strong-semimartingale proof of
Proposition~\ref{prop:time-reversal-girsanov}.
Condition~\eqref{eq:diagonal-strong-summability} is a convenient verifiable
sufficient condition, not an additional standing restriction on the
abstract framework. Although \(b_k>0\) makes \(B\) injective with dense
range, \(B\) is compact and has no bounded inverse in infinite dimensions;
the notation \(B^\dagger\) must therefore be retained.

\subsection{Entropy production and reversibility}
Let \(X\) be the solution of the nonlinear equation~\eqref{eq:intro-spde},
started from its invariant measure
\[
    X_0\sim\mu=\rho\nu.
\]
Here \(\nu\) is the reversible invariant Gaussian measure of the reference
Ornstein--Uhlenbeck equation~\eqref{eq:ou-reference}. Thus \(X\) is
stationary and \(\mathcal L(X_t)=\mu\) for every \(t\geq0\).

For \(0\le s<t<\infty\), let
\[
    \Pp^+_{[s,t]}
    :=
    \mathcal L_\mu\bigl((X_r)_{s\leq r\leq t}\bigr)
\]
be the forward path law of~\eqref{eq:intro-spde}. The time-reversal map
\[
    \Theta_{s,t}:C([s,t];\Hh)\to C([s,t];\Hh)
\]
is defined by
\[
    (\Theta_{s,t}\omega)(r)=\omega(s+t-r).
\]
The corresponding reversed path law is
\[
    \Pp^-_{[s,t]}
    :=
    \Pp^+_{[s,t]}\circ\Theta_{s,t}^{-1}.
\]
For probability measures \(P\) and \(R\) on the same path space, we write the relative entropy of \(P\) with respect to \(R\) as
\[
    \mathcal H(P\,|\,R)
    :=
    \int_{\Omega}\log\!\left(\frac{\dd P}{\dd R}\right)\dd P
\]
when \(P\ll R\), and set \(\mathcal H(P\,|\,R)=+\infty\) otherwise.
Entropy production is the short-time relative-entropy rate of the forward
path law with respect to its time reversal.
This path-space viewpoint is standard for finite-dimensional diffusions and
nonequilibrium stochastic dynamics~\cite{LebowitzSpohn1999, QW99}; for
stationary diffusions with a possibly singular diffusion matrix,
see~\cite{DaCostaPavliotis2023}. Infinite-dimensional diffusion reversal has
also been studied under local entropy or Malliavin-type
hypotheses~\cite{FollmerWakolbinger1986,MilletNualartSanz1989}. Here the reversed law is
instead compared through a reversible Gaussian Ornstein--Uhlenbeck reference.

Throughout, conditional expectations given \(X_t=x\) are taken in a fixed
Borel Markov version. Since \(X_t\sim\mu\), these conditional quantities,
including the local entropy production density, are intrinsically determined
only for \(\mu\)-almost every \(x\).

\begin{definition}[Instantaneous entropy production density]
The instantaneous entropy production density at \((t,x)\) is
\begin{equation}
\label{def:local-entropy-production-density}
    \operatorname{ep}(t,x)
    :=
    \lim_{\Delta t\downarrow0}
    \frac{1}{\Delta t}
    \mathbb E_{\Pp^+_{[t,t+\Delta t]}}
    \left[
        \log
        \left(\frac{\dd\Pp^+_{[t,t+\Delta t]}}
             {\dd\Pp^-_{[t,t+\Delta t]}}\right)
        \,\middle|\, X_t=x
    \right],
\end{equation}
provided that
\(\Pp^+_{[t,t+\Delta t]}\ll\Pp^-_{[t,t+\Delta t]}\) for sufficiently small
\(\Delta t\) and that the conditional limit exists.
\end{definition}

The corresponding unconditional short-time relative entropy defines the
total entropy production rate.

\begin{definition}[Instantaneous entropy production rate]
The instantaneous entropy production rate at time \(t\) is defined by
\begin{equation}
\label{def:instantaneous-entropy-production}
    \operatorname{EP}(t)
    :=
    \lim_{\Delta t\downarrow0}
    \frac{1}{\Delta t}
    \mathcal H\!\left(
        \Pp^+_{[t,t+\Delta t]}
        \,\middle|\,
        \Pp^-_{[t,t+\Delta t]}
    \right),
\end{equation}
whenever the limit exists.
\end{definition}

With these notions in place, the entropy production formula can be stated
explicitly.

\begin{theorem}[Entropy production formula for SPDEs]
\label{thm:main}
Suppose that the reference Assumptions~\ref{ass:AFQ} and~\ref{ass:OU}, the
density Assumption~\ref{ass:density}, and the path-regularity
Assumption~\ref{ass:path-regularity} hold. Define the irreversibility field by
\begin{equation}\label{eq:irreversible-drift}
    V(x):=2F(x)-\Q \nabla\log\rho(x).
\end{equation}
Then, for every \(t\geq0\), the entropy production density is
\begin{equation}
\label{eq:ep-density-main}
    \ep(t,x)=\ep(x)
    =
    \frac12\|V(x)\|_{\Q^{-1}}^2
    \qquad\text{for }\mu\text{-a.e. }x.
\end{equation}
The entropy production rate is
\begin{equation}
\label{eq:EP-stationary-main}
    \EP(t)=\EP
    =
    \int_{\Hh}\ep(x)\,\mu(\dd x)
    =
    \frac12
    \int_{\Hh}
    \|V(x)\|_{\Q^{-1}}^2\rho(x)
    \nu(\dd x).
\end{equation}
\end{theorem}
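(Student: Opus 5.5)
We plan to compute the finite-time Radon--Nikodym derivative \(\dd\Pp^+_{[s,t]}/\dd\Pp^-_{[s,t]}\) explicitly and then differentiate its conditional expectation at \(\Delta t=0\). By stationarity we may take \(s=t_0\) arbitrary. Let \(\Pp^{0}_{[s,t]}\) denote the stationary law of the reference Ornstein--Uhlenbeck process \eqref{eq:ou-reference} started from \(\nu\).

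The first step is a Girsanov transform. Remark~\ref{rem:girsanov-admissibility} verifies the Novikov and noise-range conditions with \(G=B^*\theta\). Using Lemma~\ref{lem:girsanov}, this gives, on path space,
\[
    \frac{\dd\Pp^+_{[s,t]}}{\dd\Pp^0_{[s,t]}}(X)
    =\rho(X_s)\exp\Bigl\{\int_s^t\langle G(X_r),\dd W_r\rangle_{\Uh}
      -\frac12\int_s^t\|G(X_r)\|_{\Uh}^2\dd r\Bigr\}.
\]
We then rewrite the stochastic integral as a functional of the path: \(\int\langle\theta(X_r),\dd X_r-AX_r\dd r\rangle_{\Hh}\), minus the drift correction involving \(F\). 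This is legitimate because \(F=\Q\theta\), \(\theta(\Hh)\subset\Dom(A^*)\), and \(X\) is a strong solution.

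The second step uses Assumption~\ref{ass:OU}: \(\Pp^0\) is invariant under \(\Theta_{s,t}\). Hence \(\dd\Pp^-/\dd\Pp^0=(\dd\Pp^+/\dd\Pp^0)\circ\Theta_{s,t}\), and therefore
\[
    \log\frac{\dd\Pp^+}{\dd\Pp^-}
    =\log\rho(X_s)-\log\rho(X_t)
      +\Bigl[\int_s^t\langle\theta(X_r),\dd X_r\rangle\Bigr]
      -\Bigl[\text{same, reversed}\Bigr]
      +\text{(drift terms)}.
\]
The quadratic terms \(\|G\|^2\) and the \(\langle\theta,AX\rangle\) terms are symmetric under reversal, so they cancel. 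The reversed line integral equals minus the backward (right-point) Itô integral. Left-point minus right-point Riemann sums then produce the trace covariation \(\int_s^t\Tr(\Q\nabla\theta(X_r))\dd r\). The forward-minus-reversed difference is thus \(2\int\langle\theta,\dd X\rangle+\int\Tr(\Q\nabla\theta)\dd r\), that is, a Stratonovich integral, obtained without Galerkin approximation.

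Next we replace \(\log\rho(X_t)-\log\rho(X_s)\) using \eqref{eq:logrho-ito} from Lemma~\ref{lem:density-regularity}. Collecting terms, with \(\Gamma=B^*(2\theta-\nabla\log\rho)\), the log-density becomes
\[
    \int_s^t\langle\Gamma(X_r),\dd W_r\rangle_{\Uh}+\int_s^t c(X_r)\dd r,
\]
where
\[
    c=\tfrac12\|\Gamma\|^2
      +\bigl[2\Lzero^{A}\text{-part of }\theta
      +\Tr(\Q\nabla\theta)+2\langle F,\theta\rangle
      -\Lzero\log\rho-\langle F,\nabla\log\rho\rangle-\ldots\bigr].
\]
The key claim is that the bracket equals \(-\rho^{-1}\divnu(\rho V/2)\cdot 2\), or some similar multiple of \(\rho^{-1}\divnu(J_\mu)\). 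One identifies \(\divnu(\Q\theta)=\Tr(\Q\nabla\theta)+\langle x,A^*\theta\rangle\)-type terms via Assumption~\ref{ass:OU} (the Gaussian divergence of \(\Q h\) involves \(A^*\)). Then \eqref{eq:FP} together with \eqref{eq:L0-rho-chain-rule} shows this divergence combination vanishes \(\mu\)-a.e.

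I expect this step to be the main obstacle: verifying rigorously that the Itô and trace corrections assemble exactly into \(\divnu(\rho F)-\Lzero\rho\) in the closed Gaussian sense, rather than only formally. I would first try establishing \(\divnu(\Q\theta)=2\langle x,A^*\theta\rangle+\Tr(\Q\nabla\theta)\) on the regular class via the symmetry \(A\Q=\Q A^*\) of Remark~\ref{rem:OU-symmetry}, and the product rule for \(\divnu(\rho\,\cdot)\).

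Once \(c=\frac12\|\Gamma\|^2\) \(\mu\)-a.e., the conclusion follows quickly. The stochastic integral has zero conditional mean, since \(\Gamma\) is bounded, so
\[
    \E\bigl[\log\tfrac{\dd\Pp^+}{\dd\Pp^-}\,\big|\,X_t=x\bigr]
    =\tfrac12\E\int_t^{t+\Delta t}\|\Gamma(X_r)\|^2\dd r.
\]
Continuity of the bounded function \(\Gamma\) and of the paths, together with dominated convergence, gives the limit \(\frac12\|\Gamma(x)\|^2=\frac12\|V(x)\|_{\Q^{-1}}^2\) by Remark~\ref{rem:girsanov-admissibility}. This yields \eqref{eq:ep-density-main}. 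For \eqref{eq:EP-stationary-main}, integrate against \(\mu\) and use stationarity: the relative entropy equals \(\frac{\Delta t}{2}\int\|V\|_{\Q^{-1}}^2\dd\mu\) exactly, so the rate is independent of \(t\).
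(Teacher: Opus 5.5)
Your proposal is correct and follows the paper's proof essentially step for step. The steps coincide: Girsanov with $G=B^*\theta$; reversal of the Ornstein--Uhlenbeck reference; reflection of left-point into right-point Riemann sums, producing $\int\Tr(\Q\nabla\theta)\,\dd r$; the closed formula $\divnu F=\Tr(\Q\nabla\theta)+2\langle A^*\theta(x),x\rangle_{\Hh}$; the It\^o formula \eqref{eq:logrho-ito}; and the logarithmic Fokker--Planck identity. Comparing with the $\Theta_{s,t}$-invariant $\nu$-started law $\Pp^0$, instead of using Proposition~\ref{prop:OU-time-reversal} with $\eta=\rho$, is only a repackaging.

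There are two harmless slips. First, with $W$ the forward noise, $\dd\Pp^+/\dd\Pp^0$ carries $+\frac12\int\|G\|_{\Uh}^2\,\dd r$, not $-\frac12$; this is irrelevant because all time integrals cancel under reversal. Second, the leftover bracket is exactly $+\rho^{-1}\bigl(\divnu(\rho F)-\Lzero\rho\bigr)=\rho^{-1}\divnu J_\mu$, with coefficient $+1$ rather than your guessed $-2$. It vanishes by \eqref{eq:FP}, \eqref{eq:L0-rho-chain-rule} and \eqref{eq:rhoF-closed-product}.
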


\begin{remark}[Finite-dimensional entropy production formula]
Let
\(\Hh=\mathbb R^d\), write
\(\nu(\dd x)=\gamma(x)\,\dd x\) and
\(\mu(\dd x)=p(x)\,\dd x\), and set
\(p=\rho\gamma\) and \(b(x)=Ax+F(x)\). Under the standing full-support and
reversible-reference assumptions, \(\Q\) is positive definite: indeed,
\(AQ_\infty=-\Q/2\), while \(A\) and \(Q_\infty\) are invertible in finite
dimensions. Moreover,
\[
    2Ax=\Q\nabla\log\gamma(x).
\]
Consequently,
\[
    V=2F-\Q\nabla\log\rho
     =2b-\Q\nabla\log p,
\]
and~\eqref{eq:EP-stationary-main} becomes
\[
    \EP
    =\frac12\int_{\mathbb R^d}
      (2b-\Q\nabla\log p)^{\mathsf T}\Q^{-1}
      (2b-\Q\nabla\log p)\,p\,\dd x.
\]
This is the diffusion form of the entropy-production
formula~\cite[Theorem 4.1.7]{jiang2004}. For a singular finite-dimensional diffusion matrix,
analogous results require an explicit range condition and a Moore--Penrose
inverse~\cite{DaCostaPavliotis2023}; that case is structurally parallel to, but not
directly covered by, the present finite-dimensional standing assumptions.
\end{remark}

Theorem~\ref{thm:main} shows that entropy production is nonnegative and that
\(\EP=0\) precisely when \(V=0\) \(\mu\)-a.e. We next relate this condition
to the generator, probability current, detailed balance, and pathwise
reversibility.

The Markov semigroup of~\eqref{eq:intro-spde},
\[
    P_t f(x):=\E[f(X_t^x)]
\]
extends to a contraction semigroup on \(L^2(\mu)\). Let
\((L,\Dom(L))\) be its closed generator. It is the nonlinear perturbation of
the Ornstein--Uhlenbeck generator associated with~\eqref{eq:ou-reference}; on the natural
common domain,
\[
    L\varphi
    =
    \Lzero\varphi+\inner{F}{\nabla\varphi}
\]
whenever the right-hand side belongs to \(L^2(\mu)\); see~\cite{DZ92,DZ96}
for the Markov-semigroup framework for semilinear stochastic evolution
equations.

We first recall the semigroup formulation of reversibility.

\begin{definition}[Detailed balance]
\label{def:detailed-balance}
The stationary solution of~\eqref{eq:intro-spde} satisfies detailed balance
with respect to \(\mu\) if
\[
    \int_{\Hh} f\,P_tg\,\dd\mu
    =
    \int_{\Hh} g\,P_tf\,\dd\mu
\]
for all bounded measurable functions \(f,g\) and all \(t\ge0\). Equivalently,
the semigroup \((P_t)_{t\ge0}\) is self-adjoint in \(L^2(\mu)\).
\end{definition}

The corresponding path-space notion compares the stationary law with its
time reversal.

\begin{definition}[Pathwise reversibility]
\label{def:path-reversibility}
The stationary solution \(X\) of~\eqref{eq:intro-spde} is pathwise
reversible if, for every \(T>0\),
\[
    \mathcal L(X_t,\ 0\le t\le T)
    =
    \mathcal L(X_{T-t},\ 0\le t\le T).
\]
Equivalently,
\[
    \Pp^+_{[0,T]}=\Pp^-_{[0,T]}.
\]
\end{definition}

To express stationary transport relative to \(\nu\), we introduce the
Gaussian-reference probability current.

\begin{definition}[Stationary probability current]
\label{def:stationary-current}
The stationary probability current relative to the reversible
Ornstein--Uhlenbeck reference~\eqref{eq:ou-reference} is
\[
    J_\mu
    :=
    \rho F-\frac12\Q \nabla\rho.
\]
\end{definition}

Under Lemma~\ref{lem:density-regularity}, we have
\[
    \nabla\rho=\rho \nabla\log\rho,
\]
we may also write
\[
    J_\mu
    =
    \frac{\rho}{2}
    \left(
        2F-\Q \nabla\log\rho
    \right).
\]
Then, \(J_\mu\) is defined \(\mu\)-almost
everywhere as an \(\Hh_{\Q}\)-valued field and belongs to
\(\Dom(\divnu)\). For the reversible Ornstein--Uhlenbeck generator,
\[
    \Lzero\rho=\frac12\divnu(\Q \nabla\rho).
\]
Combining this identity with~\eqref{eq:FP} gives
\[
    \divnu J_\mu=0
\]
in \(L^2(\nu)\).

With the finite-dimensional notation used after Theorem~\ref{thm:main}, the
usual Lebesgue-density current is
\[
    j_p:=pb-\frac12\Q\nabla p=\gamma J_\mu.
\]
Thus \(J_\mu\) is the Gaussian-reference analogue of the usual stationary
current~\cite{DaCostaPavliotis2023, QW99}; the reversible linear drift \(Ax\) has
already been absorbed into the reference measure \(\nu\).

The following theorem collects the equivalent analytic, probabilistic, and
pathwise characterizations of reversibility.

\begin{theorem}[Equivalent criteria for reversibility]
\label{thm:reversibility-criteria}
In the stationary setting described above, suppose that the hypotheses of
Theorem~\ref{thm:main} hold, and let \(V\) be the irreversibility field defined
in~\eqref{eq:irreversible-drift}.
Then the following statements are equivalent:

\begin{enumerate}[label=\textup{(\roman*)}]
    \item The entropy production vanishes:
    \[
        \EP=0.
    \]

    \item The irreversibility field vanishes:
    \[
        V(x)=0,
        \qquad \mu\text{-a.e..}
    \]

    \item The stationary probability current vanishes:
    \[
        J_\mu=0,
        \qquad \mu\text{-a.e..}
    \]

    \item The nonlinear drift satisfies the gradient condition
    \[
        F=\frac12\Q \nabla\log\rho,
        \qquad \mu\text{-a.e..}
    \]

    \item The closed generator \(L\) is self-adjoint in \(L^2(\mu)\).

    \item The semigroup satisfies detailed balance:
    \[
        \int_{\Hh} f\,P_tg\,\dd\mu
        =
        \int_{\Hh} g\,P_tf\,\dd\mu,
        \qquad t\ge0.
    \]

    \item The stationary solution is pathwise reversible:
    \[
        \Pp^+_{[0,T]}=\Pp^-_{[0,T]},
        \qquad T>0.
    \]
\end{enumerate}

With
\[
    \Phi:=-\log\rho\in C_b^2(\Hh),
    \qquad
    Z_\Phi:=\int_{\Hh}e^{-\Phi(x)}\,\nu(\dd x)=1,
    \qquad
    \rho=Z_\Phi^{-1}e^{-\Phi},
\]
the above conditions are also equivalent to the gradient structure
\[
    F=-\frac12\Q \nabla\Phi,
    \qquad \mu\text{-a.e..}
\]
\end{theorem}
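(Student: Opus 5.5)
The plan is a cycle of implications. The analytic equivalences (i)--(iv) and the $\Phi$-form come first; then (iv)$\Rightarrow$(v)$\Rightarrow$(vi)$\Rightarrow$(vii)$\Rightarrow$(i).

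\emph{Analytic block.} By Theorem~\ref{thm:main},
\[
\EP=\tfrac12\int\|V\|_{\Q^{-1}}^2\,\dd\mu .
\]
Since $V=B\Gamma$ with $\Gamma(x)\in(\ker B)^\perp$ (Remark~\ref{rem:girsanov-admissibility}), we have $\|V\|_{\Q^{-1}}=\|\Gamma\|_{\Uh}$, which vanishes only when $V=0$. Hence $\EP=0$ if and only if $V=0$ $\mu$-a.e., i.e. (i)$\Leftrightarrow$(ii). The identity $J_\mu=\rho V/2$ follows from \eqref{eq:log-gradient-consistency}; together with $\rho>0$ it gives (ii)$\Leftrightarrow$(iii). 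The definition of $V$ gives (ii)$\Leftrightarrow$(iv) directly. Finally, $\Phi=-\log\rho$ has $\nabla\Phi=-\nabla\log\rho$ and $Z_\Phi=\int\rho\,\dd\nu=1$, so the $\Phi$-form is a restatement of (iv).

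\emph{(iv)$\Rightarrow$(v).} Assume $F=\frac12\Q\nabla\log\rho$. Take $\varphi,\psi$ in the regular cylindrical class used in the proof of Lemma~\ref{lem:density-regularity}. Using $\rho\in\Dom(\Lzero)$ and the Dirichlet-form identity for the symmetric $\Lzero$, I compute
\[
\int \psi\,L\varphi\,\rho\,\dd\nu
=\int\psi\rho\,\Lzero\varphi\,\dd\nu+\tfrac12\int\psi\langle\Q\nabla\rho,\nabla\varphi\rangle\,\dd\nu
=-\tfrac12\int\rho\,\langle\Q\nabla\varphi,\nabla\psi\rangle\,\dd\nu .
\]
The first equality uses $\rho F=\frac12\Q\nabla\rho$. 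The second expands $\int\psi\rho\,\Lzero\varphi\,\dd\nu=-\frac12\int\langle\Q\nabla\varphi,\nabla(\psi\rho)\rangle\,\dd\nu$ and cancels the $\psi\nabla\rho$ term. The right-hand side is symmetric in $\varphi,\psi$, so $L$ is symmetric on this class.

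This is the main obstacle: symmetry on a test class must be upgraded to self-adjointness of the closed generator $L$. My first attempt is to show the class is a core for $L$ in $L^2(\mu)$. This should follow from invariance of the class under $P_t$ up to approximation, since $F$ is $C_b^2$-smooth and Lipschitz. Alternatively, one can identify $L$ with the generator of the closed symmetric Dirichlet form
\[
\mathcal E(\varphi,\psi)=\tfrac12\int\langle\Q\nabla\varphi,\nabla\psi\rangle\,\dd\mu .
\]
Closability holds because $\rho$ is bounded above and below, so $W^{1,2}(\mu)=W^{1,2}(\nu)$ with equivalent norms. Since both $L$ and the form generator generate Markov semigroups extending the same operator on a core, they coincide.

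\emph{(v)$\Rightarrow$(vi)$\Rightarrow$(vii)$\Rightarrow$(i).} If $L$ is self-adjoint, spectral calculus makes $P_t=e^{tL}$ self-adjoint, which is (vi). Given (vi), the Markov property expresses the finite-dimensional distributions of $X$ through iterated kernels, and symmetry of each kernel with respect to $\mu$ shows they are invariant under $r\mapsto T-r$, as in \cite[Theorem 4.3.3]{jiang2004}. Path continuity then gives $\Pp^+_{[0,T]}=\Pp^-_{[0,T]}$, which is (vii). Given (vii), the Radon--Nikodym derivative is $1$ and the relative entropy on every $[t,t+\Delta t]$ is $0$, so $\EP=0$ by \eqref{def:instantaneous-entropy-production} and Theorem~\ref{thm:main}. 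This closes the cycle.
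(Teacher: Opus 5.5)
\paragraph{Gap in the cycle.}
Your analytic block is fine, and so are your implications (v)$\Rightarrow$(vi)$\Rightarrow$(vii)$\Rightarrow$(i). The cycle does not close, however, because \textup{(iv)}$\Rightarrow$\textup{(v)} is not proved. Your computation shows only that $L$ is symmetric on a cylindrical test class $D$. To conclude that the closed generator $L$ of $(P_t)$ on $L^2(\mu)$ is self-adjoint, you need $D$ to be a core for $L$. Then $L=\overline{L|_D}$ is symmetric, and a symmetric generator of a contraction semigroup is self-adjoint.

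Neither of your two suggestions supplies this core property.
\begin{itemize}
\item For general nonlinear $F$, $P_t$ does not map cylindrical functions to cylindrical functions. So ``invariance of the class under $P_t$ up to approximation'' amounts to essential m-dissipativity of an infinite-dimensional Kolmogorov operator. That is a delicate property, it is not among the hypotheses, and you do not derive it.
\item The Dirichlet-form route only produces some self-adjoint operator $L_{\mathcal E}$ that agrees with $L$ on $D$. Identifying $L=L_{\mathcal E}$ again requires $D$ to be a core for $L$ (or Markov uniqueness). Your sentence ``they coincide'' assumes exactly the missing fact.
\end{itemize}
Without this step, nothing leads from (i)--(iv) back to (v)--(vii).

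\paragraph{How the paper closes the cycle.}
The paper avoids the core question by going through path space rather than through the generator. From Proposition~\ref{prop:path-ratio} and the proof of Theorem~\ref{thm:main},
\[
\log\frac{\dd\Pp^+_{[0,T]}}{\dd\Pp^-_{[0,T]}}(X)=\int_0^T\langle\Gamma(X_r),\dd W_r\rangle_{\Uh}+\frac12\int_0^T\|\Gamma(X_r)\|_{\Uh}^2\,\dd r .
\]
Hence $\mathcal H(\Pp^+_{[0,T]}\,|\,\Pp^-_{[0,T]})=\frac T2\int_{\Hh}\|V\|_{\Q^{-1}}^2\,\dd\mu$, and since the two path laws are equivalent, the equality case of Gibbs' inequality gives \textup{(ii)}$\Leftrightarrow$\textup{(vii)} directly.

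The remaining links are then standard:
\begin{itemize}
\item \textup{(vii)}$\Rightarrow$\textup{(vi)}: test pathwise reversibility on $f(X_0)g(X_t)$.
\item \textup{(vi)}$\Rightarrow$\textup{(v)}: detailed balance makes every $P_t$ self-adjoint, so $L$ is symmetric. A symmetric generator of a contraction semigroup has every $\lambda>0$ in its resolvent set and is therefore self-adjoint.
\end{itemize}
If you replace your \textup{(iv)}$\Rightarrow$\textup{(v)} by the chain \textup{(ii)}$\Rightarrow$\textup{(vii)}$\Rightarrow$\textup{(vi)}$\Rightarrow$\textup{(v)}, the argument closes with no core or essential self-adjointness statement needed.
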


The equivalence of semigroup symmetry, self-adjointness of the generator,
detailed balance, and stationary path reversal belongs to the general
theory of symmetric Markov processes~\cite[Theorem 4.3.3]{jiang2004}. The
additional content here is the equivalence with the explicit conditions
\(V=0\), \(J_\mu=0\), and vanishing entropy production.

\subsection{A reversible diagonal strong-solution example}

The formal choice \(\Q=I\) is not compatible with the strong-solution
framework used in Proposition~\ref{prop:time-reversal-girsanov}: in an
infinite-dimensional state space, \(I\) is not trace class and the
corresponding cylindrical noise is not an \(\Hh\)-valued semimartingale.
This is the standard distinction between cylindrical and Hilbert-space-valued
Wiener noise~\cite{DZ92}.
We therefore replace that choice by the following injective trace-class
diagonal covariance, while retaining an infinite-dimensional noise in every
mode.

Let \(\Hh=\Uh\) have orthonormal basis \(\{e_k\}_{k\geq1}\), and let
\(A:\Dom(A)\subset\Hh\to\Hh\) be self-adjoint with
\[
    Ae_k=-\lambda_ke_k,
    \qquad
    0<\lambda_1\leq\lambda_2\leq\cdots,
    \qquad
    \lambda_k\longrightarrow\infty.
\]
Assume
\begin{equation}\label{eq:example-spectral-summability}
    \sum_{k=1}^{\infty}\frac1{\lambda_k}<\infty.
\end{equation}
Then \(S(t)=e^{tA}\) is exponentially stable. Define
\[
    b_k:=\frac1{1+\lambda_k},
    \qquad
    q_k:=b_k^2,
    \qquad
    Be_k:=b_ke_k,
    \qquad
    \Q e_k=q_ke_k.
\]
Since
\[
    \sum_{k=1}^{\infty}\lambda_kb_k^2
    =
    \sum_{k=1}^{\infty}
    \frac{\lambda_k}{(1+\lambda_k)^2}<\infty,
\]
Proposition~\ref{prop:diagonal-strong-framework} applies. In particular,
\(B\in\mathcal L_2(\Hh,\Hh)\), and the stationary stochastic convolution
belongs to \(\Dom(A)\) almost surely at each fixed time, with its \(A\)-image
square-integrable on finite time intervals. The Ornstein--Uhlenbeck reference equation \eqref{eq:ou-reference} is
\begin{equation}\label{eq:example-OU}
    \dd Z_t=AZ_t\,\dd t+B\,\dd W_t,
\end{equation}
and its invariant covariance in \eqref{invariant-covariance} satisfies
\[
    Q_\infty e_k=\frac{q_k}{2\lambda_k}e_k.
\]
Thus \(Q_\infty\) is injective and trace class. Since \(A\) and \(\Q\) are
self-adjoint and diagonal in the same basis, the Ornstein--Uhlenbeck semigroup is symmetric
in \(L^2(\nu)\), where
\[
    \nu=\N(0,Q_\infty).
\]

For \(x\in\Hh\), write \(x_k:=\inner{x}{e_k}\), and set
\[
    a_k:=\frac1{(1+\lambda_k)^2}.
\]
The series
\[
    \Phi(x):=2\sum_{k=1}^{\infty}a_k\cos(x_k)
\]
converges uniformly and defines a bounded \(C^2\) potential. Define
\[
    \theta(x)
    :=
    -\frac12\nabla\Phi(x)
    =
    \sum_{k=1}^{\infty}a_k\sin(x_k)e_k
\]
and
\begin{equation}\label{eq:example-drift}
\begin{aligned}
    F(x)
    &:=
    \Q\theta(x)
    =
    \sum_{k=1}^{\infty}q_ka_k\sin(x_k)e_k, \\
    G(x)
    &:=
    B^*\theta(x)
    =
    \sum_{k=1}^{\infty}\sqrt{q_k}\,a_k\sin(x_k)e_k.
\end{aligned}
\end{equation}
Then \(F=BG\), and the nonlinear equation becomes
\begin{equation}\label{eq:example-nonlinear-equation}
    \dd X_t
    =
    \left(
        AX_t+
        \sum_{k=1}^{\infty}q_ka_k
        \sin\bigl(\inner{X_t}{e_k}\bigr)e_k
    \right)\dd t
    +B\,\dd W_t.
\end{equation}

The maps \(F\) and \(G\) are bounded and globally Lipschitz. Moreover,
\[
    \nabla\theta(x)h
    =
    \sum_{k=1}^{\infty}
    a_k\cos(x_k)\inner{h}{e_k}e_k,
\]
and
\[
    \nabla^2\theta(x)[h_1,h_2]
    =-\sum_{k=1}^{\infty}
      a_k\sin(x_k)
      \inner{h_1}{e_k}\inner{h_2}{e_k}e_k.
\]
The bounds
\[
    \|\nabla\theta(x)\|_{\mathcal L(\Hh)}
    \leq \sup_k a_k,
    \qquad
    \sup_{\|h_1\|_{\Hh},\|h_2\|_{\Hh}\leq1}
    \|\nabla^2\theta(x)[h_1,h_2]\|_{\Hh}
    \leq \sup_k a_k
\]
and the coordinatewise Lipschitz estimates show that
\(\theta\in C_b^2(\Hh;\Hh)\). Furthermore,
\[
\begin{aligned}
    \Q \nabla\theta(x)h
    &=
    \sum_{k=1}^{\infty}
    q_ka_k\cos(x_k)\inner{h}{e_k}e_k, \\
    \sup_{x\in\Hh}
    \|\Q \nabla\theta(x)\|_{\mathcal L_1(\Hh)}
    &\leq
    \sum_{k=1}^{\infty}q_ka_k<\infty.
\end{aligned}
\]
The map \(x\mapsto\Q \nabla\theta(x)\) is trace-norm continuous. Moreover,
\[
\begin{aligned}
    A^*\theta(x)=A\theta(x)
    &=-\sum_{k=1}^{\infty}\lambda_ka_k\sin(x_k)e_k, \\
    \sup_{x\in\Hh}\|A^*\theta(x)\|_{\Hh}^2
    &\leq
    \sum_{k=1}^{\infty}\lambda_k^2a_k^2<\infty.
\end{aligned}
\]
Hence the requirements on \(\theta\) in
Assumption~\ref{ass:path-regularity} are satisfied.

The stationary Gaussian law obeys
\[
    \int_{\Hh}\|Ax\|_{\Hh}^2\,\nu(\dd x)
    =
    \frac12\sum_{k=1}^{\infty}\lambda_kq_k<\infty.
\]
The density introduced in \eqref{eq:example-invariant-density} below is bounded above and below, so the same
integrability holds under \(\mu\). Once invariance of \(\mu\) is established
below, Proposition~\ref{prop:diagonal-strong-framework} therefore shows that
the corresponding stationary realizations are \(\Hh\)-valued continuous
strong solutions.

For this diagonal noise,
\[
    \Hh_{\Q}
    =
    \left\{
        v=\sum_{k=1}^{\infty}v_ke_k:
        \sum_{k=1}^{\infty}\frac{v_k^2}{q_k}<\infty
    \right\},
    \qquad
    \|v\|_{\Q^{-1}}^2
    =
    \sum_{k=1}^{\infty}\frac{v_k^2}{q_k}.
\]
In particular,
\[
    \|F(x)\|_{\Q^{-1}}^2
    =
    \sum_{k=1}^{\infty}q_ka_k^2\sin^2(x_k)
    =
    \|G(x)\|_{\Uh}^2.
\]

The Gaussian divergence has the representation
\begin{equation}\label{eq:example-gaussian-divergence}
\begin{aligned}
    \divnu F(x)=
    \Tr\bigl(\Q \nabla\theta(x)\bigr)
    +2\inner{A^*\theta(x)}{x}=
    \sum_{k=1}^{\infty}q_ka_k\cos(x_k)
    -2\sum_{k=1}^{\infty}\lambda_ka_kx_k\sin(x_k).
\end{aligned}
\end{equation}
The second series is absolutely convergent for every \(x\in\Hh\), and the
resulting divergence belongs to \(L^p(\nu)\) for every finite \(p\geq1\).

Define
\begin{equation}\label{eq:example-invariant-density}
    \rho(x):=Z_\Phi^{-1}e^{-\Phi(x)},
    \qquad \text{with}~
    Z_\Phi:=\int_{\Hh}e^{-\Phi(x)}\,\nu(\dd x).
\end{equation}
Since \(\Phi\) is bounded, \(0<Z_\Phi<\infty\), and \(\rho\) is bounded
above and below by positive constants. Furthermore,
\[
    \nabla\log\rho=-\nabla\Phi=2\theta,
    \qquad
    \nabla\rho=2\rho\theta,
    \qquad
    \Q \nabla\log\rho=2F.
\]
Since \(\log\rho=-\Phi-\log Z_\Phi\), we also have
\[
    \nabla^2(\log\rho)=2\nabla\theta,
    \qquad
    A^*\nabla(\log\rho)=2A^*\theta,
    \qquad
    \Q \nabla^2(\log\rho)=2\Q \nabla\theta.
\]
The estimates above show that \(\log\rho\in C_b^2(\Hh)\),
\((\nabla\log\rho)(\Hh)\subset\Dom(A^*)\), and
\(A^*\nabla(\log\rho)\in C_b(\Hh;\Hh)\); hence the smooth logarithmic-density
requirements in Assumption~\ref{ass:path-regularity} hold. Moreover,
\(\log\rho,\rho\in W^{1,2}(\Hh,\nu)\), and their weak gradients agree with
the displayed classical gradients. We next verify the closed-generator
identities explicitly.
Define the cylindrical functions
\[
    \psi_n(x):=-2\sum_{k=1}^na_k\cos(x_k)-\log Z_\Phi.
\]
Then \(\psi_n\to\log\rho\) uniformly, and
\[
    \Lzero\psi_n(x)
    =\sum_{k=1}^nq_ka_k\cos(x_k)
     -2\sum_{k=1}^n\lambda_ka_kx_k\sin(x_k).
\]
The first series converges uniformly. The second converges in
\(L^2(\nu)\), because
\(\sum_{k=1}^{\infty}\lambda_k^2a_k^2<\infty\) and
\(x\in L^2(\nu;\Hh)\). Closedness of \(\Lzero\) therefore gives
\begin{equation}\label{eq:example-L0-logrho}
    \log\rho\in\Dom(\Lzero),
    \qquad
    \Lzero(\log\rho)=\divnu F
    \quad\text{in }L^2(\nu).
\end{equation}

Similarly, set
\[
    \widehat\rho_n(x)
    :=Z_\Phi^{-1}\exp\left\{-2\sum_{k=1}^na_k\cos(x_k)\right\}.
\]
The functions \(\widehat\rho_n\) converge uniformly to \(\rho\), and direct
cylindrical differentiation gives
\(\Lzero\widehat\rho_n=\widehat\rho_n h_n\), where \(h_n\) is obtained by
truncating each series in the bracket below at \(n\):
\[
\begin{aligned}
    h(x)
    :=\sum_{k=1}^{\infty}q_ka_k\cos(x_k)
        -2\sum_{k=1}^{\infty}\lambda_ka_kx_k\sin(x_k) +
        2\sum_{k=1}^{\infty}q_ka_k^2\sin^2(x_k).
\end{aligned}
\]
The first and third series converge uniformly, while the second converges
in \(L^2(\nu)\); hence \(h_n\to h\) in \(L^2(\nu)\), where \(h\) is the
displayed bracket. Since
\(\widehat\rho_n\to\rho\) in \(L^\infty(\nu)\), it follows that
\(\widehat\rho_n h_n\to\rho h\) in \(L^2(\nu)\).
Thus \(\rho\in\Dom(\Lzero)\) and
\(\Lzero\rho=\rho h\) in \(L^2(\nu)\). Since
\(\rho\in W^{1,2}(\Hh,\nu)\cap L^\infty(\nu)\),
\(F\in\Dom(\divnu)\), and \(\inner{F}{\nabla\rho}\in L^2(\nu)\),
Lemma~\ref{lem:gaussian-divergence-product} gives
\[
    \divnu(\rho F)
    =
    \rho\divnu F+\inner{\nabla\rho}{F}
    =
    \Lzero\rho.
\]
For \(x\in\Dom(A)\), the classical Kolmogorov expression for
\(\log\rho\) is
\[
    \inner{Ax}{\nabla(\log\rho)(x)}
    +\frac12\Tr\bigl(\Q \nabla^2(\log\rho)(x)\bigr)
    =\divnu F(x),
\]
which agrees with the closed-generator value in~
\eqref{eq:example-L0-logrho}. We choose this Borel expression as the
representative of \(\Lzero(\log\rho)\). Thus the general derivation of~
\eqref{eq:logrho-ito} applies. Finally,
\[
    B^*\nabla(\log\rho)=2G,
    \qquad
    \inner{F}{\nabla(\log\rho)}=2\|G\|_{\Uh}^2,
\]
and, together with~\eqref{eq:example-L0-logrho}, all integrability
requirements used below hold in every finite \(L^p(\mu)\).

This infinitesimal identity can be upgraded to invariance without a
generator-core argument because the example is coordinatewise decoupled.
Indeed, the first \(n\) coordinates form an autonomous
\(n\)-dimensional diffusion whose invariant density relative to its
Gaussian Ornstein--Uhlenbeck law is
\[
    \rho_n(x_1,\ldots,x_n)
    =
    Z_n^{-1}
    \exp\left\{
        -2\sum_{k=1}^na_k\cos(x_k)
    \right\}.
\]
Here \(Z_n\) is the normalizing constant with respect to the Gaussian
marginal \(\nu_n\).
The usual finite-dimensional integration-by-parts identity gives detailed
balance for this projected gradient
diffusion~\cite{FukushimaOshimaTakeda, QW99}. By the product structure of \(\nu\) and
the absolute convergence of \(\sum_ka_k\), the first \(n\)-coordinate
marginal of \(\mu\) is exactly \(\rho_n\nu_n\). Therefore, for every bounded
function \(f\) depending on finitely many coordinate maps
\(x_1,\ldots,x_n\), the autonomous finite-dimensional dynamics gives
\[
    \int_{\Hh}P_tf\,\dd\mu=\int_{\Hh}f\,\dd\mu.
\]
These coordinate-cylinder functions generate \(\mathcal B(\Hh)\) and
determine probability measures on the separable space \(\Hh\). Hence the
well-posed Markov semigroup of~
\eqref{eq:example-nonlinear-equation} leaves \(\mu\) invariant.
Finally,
\[
    V=2F-\Q \nabla\log\rho=0,
    \qquad
    J_\mu=\rho F-\frac12\Q \nabla\rho=0.
\]
Therefore, Theorem~\ref{thm:reversibility-criteria} gives \(\EP=0\),
self-adjointness of the generator in \(L^2(\mu)\), and pathwise reversibility
of the stationary strong solution. For
\(\Hh=L^2(0,\pi)\) and \(A=\Delta_D\), one has \(\lambda_k=k^2\), so all
the summability conditions above hold.

\section{Proofs of the main results}\label{Sec3}

\subsection{Infinite-dimensional Girsanov transform}

We first compare the nonlinear dynamics~\eqref{eq:intro-spde} with the Ornstein--Uhlenbeck
reference process~\eqref{eq:ou-reference}. The sign of the exponential
density below is chosen so that the change of measure removes \(F\) from the
drift.

This is the standard Girsanov change of measure for stochastic evolution
equations driven by cylindrical Wiener noise;
see~\cite[Theorem~10.14 and Theorem~10.18]{DZ92}. A related change of
reference measure is used in the Onsager--Machlup analysis of stochastic
evolution equations in~\cite{BardinaRoviraTindel2003}. The purpose here is
different: we compare the stationary forward and reversed path laws, and the
relevant control is the minimal noise-space representative \(B^\dagger F\).

\begin{lemma}[Infinite-dimensional Girsanov transform]
\label{lem:girsanov}
Let \(0\leq s<t<\infty\), and let \(X\) be the adapted strong solution of the nonlinear dynamics~\eqref{eq:intro-spde},
\[
    \dd X_r=(AX_r+F(X_r))\,\dd r+B\,\dd W_r,
    \qquad r\in[s,t],
\]
under \(\Pp\), where \(F=BG\) for a bounded globally Lipschitz map
\(G:\Hh\to(\ker B)^\perp\subset\Uh\). For
\(r\in[s,t]\), define
\begin{equation}
\label{eq:girsanov-density}
    Z_{s,r}
    :=
    \exp\left\{
        -\int_s^r
         \langle G(X_u),\dd W_u\rangle_{\Uh}
        -\frac12\int_s^r\|G(X_u)\|_{\Uh}^2\,\dd u
    \right\}.
\end{equation}
Then \((Z_{s,r})_{r\in[s,t]}\) is a uniformly integrable martingale and
\[
    \E[Z_{s,t}\mid\mathcal F_s]=1.
\]
Define \(\widetilde{\Pp}\) on \(\mathcal F_t\) by
\[
    \frac{\dd\widetilde{\Pp}}{\dd\Pp}\bigg|_{\mathcal F_t}
    =Z_{s,t}.
\]
Then \(\widetilde{\Pp}\) and \(\Pp\) are equivalent on \(\mathcal F_t\),
and
\[
    \widetilde W_r-\widetilde W_s
    :=
    W_r-W_s+\int_s^rG(X_u)\,\dd u
\]
is a cylindrical Wiener process on \(\Uh\) under
\(\widetilde{\Pp}\). Moreover, \(X\) satisfies
\[
    \dd X_r=AX_r\,\dd r+B\,\dd\widetilde W_r
\]
on \([s,t]\), in the strong sense, and the change of measure does not alter
the law of \(X_s\).
\end{lemma}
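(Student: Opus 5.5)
The plan is to reduce everything to the classical Girsanov theorem for cylindrical Wiener processes, as in \cite[Theorem~10.14]{DZ92}, once the exponential $Z_{s,\cdot}$ has been shown to be a true martingale.

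First I will fix the stochastic integral. Since $X$ is adapted with continuous paths and $G$ is bounded and globally Lipschitz, the process $G(X_u)$ is progressively measurable with values in $\Uh$. It is also bounded by $\|G\|_\infty$. Hence $M_r:=\int_s^r\langle G(X_u),\dd W_u\rangle_{\Uh}$ is a continuous square-integrable real martingale, with quadratic variation $\int_s^r\|G(X_u)\|_{\Uh}^2\dd u\le (r-s)\|G\|_\infty^2$. Then $Z_{s,r}=\exp(-M_r-\frac12\langle M\rangle_r)$ is a positive local martingale, and therefore a supermartingale. Novikov's criterion holds trivially because $\langle M\rangle_t$ is deterministically bounded, as already recorded in Remark~\ref{rem:girsanov-admissibility}. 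So $Z_{s,\cdot}$ is a true martingale on $[s,t]$ with $Z_{s,s}=1$. Uniform integrability follows either from closure by $Z_{s,t}$ on the compact interval or from an $L^2$ bound $\E Z_{s,t}^2\le e^{(t-s)\|G\|_\infty^2}$. The martingale property gives $\E[Z_{s,t}\mid\mathcal F_s]=Z_{s,s}=1$.

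Next, define $\widetilde\Pp$ on $\mathcal F_t$. It is a probability measure, and it is equivalent to $\Pp$ because $Z_{s,t}>0$ almost surely. Since $\E[Z_{s,t}\mid\mathcal F_s]=1$, we get $\widetilde\Pp(E)=\E[\mathbf 1_E Z_{s,t}]=\Pp(E)$ for every $E\in\mathcal F_s$. So $\widetilde\Pp=\Pp$ on $\mathcal F_s$, and in particular the law of $X_s$ is unchanged.

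For the Wiener property, fix an orthonormal basis $\{f_k\}$ of $\Uh$. For each finite family of directions $u\in\Uh$, the scalar processes $\widetilde W_r(u)-\widetilde W_s(u)=W_r(u)-W_s(u)+\int_s^r\langle G(X_v),u\rangle\dd v$ are continuous. By the scalar (multidimensional) Girsanov theorem they are $\widetilde\Pp$-local martingales, with cross-variations $(r-s)\langle u,u'\rangle_{\Uh}$; these are unchanged because the added drift has finite variation. Lévy's characterization then shows that $(\widetilde W(f_k))_k$ are independent standard Brownian motions, so $\widetilde W$ is a cylindrical Wiener process on $\Uh$ under $\widetilde\Pp$.

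The main point requiring care is the identification $\int_s^r B\,\dd\widetilde W=\int_s^rB\,\dd W+\int_s^r BG(X_u)\dd u$ as $\Hh$-valued integrals. I will verify it first for the finite-rank truncations $B\Pi_n$, where $\Pi_n$ is the projection onto the span of $f_1,\dots,f_n$; there the identity is a finite sum of scalar ones. Then I pass to the limit $n\to\infty$. The stochastic integrals converge in $L^2$ under both $\Pp$ and $\widetilde\Pp$ because $B\in\mathcal L_2$, and convergence in probability is preserved under an equivalent measure. Since $BG=F$, substituting into \eqref{eq:strong-solution-form} yields $X_r=X_s+\int_s^rAX_u\dd u+\int_s^rB\,\dd\widetilde W_u$ on a single event of full $\Pp$-measure, hence of full $\widetilde\Pp$-measure. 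The path regularity in Assumption~\ref{ass:path-regularity}~(iii) concerns almost-sure properties and is also preserved under an equivalent change of measure. This gives the strong Ornstein--Uhlenbeck equation under $\widetilde\Pp$.
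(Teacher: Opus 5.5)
Your proposal is correct and follows the same route as the paper. Boundedness of $G$ gives Novikov's condition, and the Girsanov theorem yields the cylindrical Wiener process $\widetilde W$; you unpack this via scalar Girsanov and L\'evy's characterization, where the paper cites \cite[Theorem~10.14 and Theorem~10.18]{DZ92}. Substituting $F=BG$ gives the Ornstein--Uhlenbeck equation, and $\E[Z_{s,t}\mid\mathcal F_s]=1$ shows $\widetilde\Pp=\Pp$ on $\mathcal F_s$. Your finite-rank argument for $\int_s^r B\,\dd\widetilde W_u=\int_s^r B\,\dd W_u+\int_s^r F(X_u)\,\dd u$ usefully makes explicit a step the paper performs by formal substitution.
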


\begin{proof}
Since \(X\) is adapted with continuous paths and \(G\) is continuous,
\(G(X_r)\) is progressively measurable. Its boundedness gives
\[
    \E\exp\left\{
        \frac12\int_s^t\|G(X_u)\|_{\Uh}^2\,\dd u
    \right\}
    \leq
    \exp\left\{
        \frac{t-s}{2}\|G\|_\infty^2
    \right\}<\infty.
\]
Novikov's condition therefore implies that~
\eqref{eq:girsanov-density} is a uniformly integrable stochastic
exponential. The Hilbert-space Girsanov theorem
\cite[Theorem~10.14 and Theorem~10.18]{DZ92} then gives the asserted
cylindrical Wiener process \(\widetilde W\).

Since \(F=BG\), substitution of
\(\dd W_r=\dd\widetilde W_r-G(X_r)\,\dd r\) yields
\[
\begin{aligned}
    \dd X_r
    &=
    \bigl(AX_r+F(X_r)\bigr)\,\dd r
    +B\bigl(\dd\widetilde W_r-G(X_r)\,\dd r\bigr) \\
    &=AX_r\,\dd r+B\,\dd\widetilde W_r.
\end{aligned}
\]
Finally, for every bounded \(\mathcal F_s\)-measurable random variable \(Y\),
\[
    \widetilde{\E}[Y]
    =
    \E\!\left[Y\,\E[Z_{s,t}\mid\mathcal F_s]\right]
    =
    \E[Y].
\]
Taking \(Y=f(X_s)\) proves the last assertion.
\end{proof}

\begin{remark}[Noise-range condition]
The factorization \(F=BG\) in Lemma~\ref{lem:girsanov} is the standard
noise-range condition for a Girsanov change of measure between stochastic
evolution equations~\cite{DZ92}; no bounded inverse of \(B\) is required.
Minimality of \(G\) and automatic validity of Novikov's condition under the
main assumptions are recorded in
Remark~\ref{rem:girsanov-admissibility}. Localized or
exponential-integrability conditions could replace boundedness, but are not
needed here.
\end{remark}

\subsection{Time reversal of the reversible Ornstein--Uhlenbeck reference}

Lemma~\ref{lem:girsanov} reduces the nonlinear dynamics to the reversible Ornstein--Uhlenbeck
reference process. The Ornstein--Uhlenbeck process is reversible when it starts from its
invariant measure \(\nu\); if it starts instead from a weighted measure
\(\eta\nu\), time reversal introduces the endpoint correction appearing
below.

Symmetry and reversibility of Hilbert-space Ornstein--Uhlenbeck semigroups
are characterized in~\cite{MG02}; see also the
survey~\cite{Bogachev2018}. Time reversal of more general infinite-dimensional
diffusions was studied under local finite-entropy assumptions
in~\cite{FollmerWakolbinger1986}; Millet, Nualart, and Sanz instead used
techniques from the stochastic calculus of
variations~\cite{MilletNualartSanz1989}. The role of time reversal here is
more specific: we reverse the symmetric Ornstein--Uhlenbeck reference law by detailed balance
and then recover the nonlinear law through Girsanov's theorem.

\begin{proposition}[Time reversal of the reversible Ornstein--Uhlenbeck reference]
\label{prop:OU-time-reversal}
Let \(Z\) be the continuous mild solution associated with
\[
    \dd Z_r=AZ_r\,\dd r+B\,\dd W_r,
    \qquad r\in[s,t],
\]
and suppose that Assumptions~\ref{ass:AFQ} and~\ref{ass:OU} hold. Let
\(\eta:\Hh\to(0,\infty)\) be measurable with
\[
    \int_{\Hh}\eta\,\dd\nu=1,
\]
and start the reference process from
\[
    Z_s\sim\eta\nu.
\]
Denote expectation for this Ornstein--Uhlenbeck process by \(\widetilde{\E}_\eta\). Then, for
every bounded measurable path functional
\(\Phi:C([s,t];\Hh)\to\R\),
\begin{equation}
\label{eq:OU-time-reversal}
    \widetilde{\E}_\eta
    \left[
        \Phi(Z_\cdot)\frac{\eta(Z_t)}{\eta(Z_s)}
    \right]
    =
    \widetilde{\E}_\eta
    \left[
        \Phi(\Theta_{s,t}Z_\cdot)
    \right].
\end{equation}
\end{proposition}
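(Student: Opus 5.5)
The identity reduces to the stationary case \(\eta\equiv1\), where it is exactly reversibility of the Ornstein--Uhlenbeck process started from \(\nu\). The plan is to treat that case first, pass from cylindrical functionals to all bounded measurable \(\Phi\), and then insert the weight \(\eta\).

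\emph{Step 1: the stationary case.} Let \(\widetilde{\E}_1\) denote expectation when \(Z_s\sim\nu\). By stationarity and the Markov property, finite-dimensional distributions are expressed through the transition semigroup \(R\). For \(s=r_0<r_1<\dots<r_n=t\) and bounded measurable \(f_0,\dots,f_n\), we have
\[
\widetilde{\E}_1\Bigl[\prod_{i=0}^n f_i(Z_{r_i})\Bigr]
=\int_{\Hh} f_0\,R_{r_1-r_0}\bigl(f_1R_{r_2-r_1}(\cdots R_{r_n-r_{n-1}}f_n)\bigr)\,\dd\nu.
\]
Assumption~\ref{ass:OU} says that each \(R_h\) is symmetric in \(L^2(\nu)\). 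Transferring the operators one at a time from one side to the other by this symmetry, in the standard telescoping manner, the right-hand side equals the same expression with the time order reversed. That reversed expression is \(\widetilde{\E}_1\bigl[\prod_i f_i(Z_{s+t-r_i})\bigr]\). Thus the finite-dimensional laws of \(Z\) and \(\Theta_{s,t}Z\) agree under \(\nu\)-stationarity.

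\emph{Step 2: extension to all bounded measurable \(\Phi\).} Both \(Z\) and \(\Theta_{s,t}Z\) have continuous paths. The Borel \(\sigma\)-algebra of \(C([s,t];\Hh)\) is generated by the evaluation maps, since \(\Hh\) is separable. By the \(\pi\)--\(\lambda\) (monotone class) theorem, agreement of finite-dimensional laws therefore gives equality of the path laws. Hence \(\widetilde{\E}_1[\Phi(Z)]=\widetilde{\E}_1[\Phi(\Theta_{s,t}Z)]\) for all bounded measurable \(\Phi\), and by monotone convergence for all nonnegative measurable \(\Phi\).

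\emph{Step 3: inserting the weight \(\eta\).} Conditionally on \(Z_s\), the law of the path does not depend on the initial distribution. Therefore, for nonnegative measurable \(\Psi\),
\[
\widetilde{\E}_\eta[\Psi(Z)]=\widetilde{\E}_1[\eta(Z_s)\Psi(Z)].
\]
First take \(\Phi\geq0\), so that \(\Phi(Z)\eta(Z_t)/\eta(Z_s)\) is a nonnegative functional; no integrability issue arises even though \(\eta\) may be unbounded. Then
\[
\widetilde{\E}_\eta\Bigl[\Phi(Z)\tfrac{\eta(Z_t)}{\eta(Z_s)}\Bigr]=\widetilde{\E}_1[\Phi(Z)\eta(Z_t)].
\]
Apply Step 2 to the functional \(\omega\mapsto\Phi(\omega)\eta(\omega(t))\). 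This gives \(\widetilde{\E}_1[\Phi(\Theta_{s,t}Z)\eta(Z_s)]\), because \((\Theta_{s,t}Z)(t)=Z_s\). The last expression equals \(\widetilde{\E}_\eta[\Phi(\Theta_{s,t}Z)]\).

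For general bounded \(\Phi\), split \(\Phi=\Phi^+-\Phi^-\). Both resulting terms are finite, since the right-hand sides are bounded by \(\|\Phi\|_\infty\), so subtraction is legitimate and yields~\eqref{eq:OU-time-reversal}.

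The main obstacle is Step 1. The issue is ensuring that symmetry of \(R_h\) on \(L^2(\nu)\) applies to the bounded measurable \(f_i\). These lie in \(L^\infty\subset L^2(\nu)\), so the symmetry is directly available. We also need the Markov property of the mild solution with the pointwise transition kernel \(R_h\varphi(x)=\E\varphi(Z_h^x)\). This kernel coincides \(\nu\)-a.e. with the \(L^2(\nu)\) operator, which suffices because every integral above is taken against \(\nu\).
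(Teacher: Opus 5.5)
Your proof is correct and follows essentially the paper's route: symmetry of the Ornstein--Uhlenbeck semigroup in \(L^2(\nu)\) reverses the finite-dimensional distributions, and a monotone class argument on \(C([s,t];\Hh)\) finishes. The only difference is ordering. You first prove the unweighted stationary reversal and then insert \(\eta\) through \(\widetilde{\E}_\eta[\Psi(Z)]=\widetilde{\E}_1[\eta(Z_s)\Psi(Z)]\), which makes the paper's separate integrability check of \(\eta(Z_t)/\eta(Z_s)\) automatic from the case \(\Phi\ge0\). The paper instead carries the weight \(\eta(x_n)/\eta(x_0)\) inside the kernel integral and applies \(\nu(\dd x)\,p_r(x,\dd y)=\nu(\dd y)\,p_r(y,\dd x)\) directly.
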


\begin{proof}
Let \(p_r(x,\dd y)\) be the transition kernel of the Ornstein--Uhlenbeck semigroup. Its
symmetry in \(L^2(\nu)\) is equivalent to the detailed-balance identity
\begin{equation}\label{eq:OU-detailed-balance-kernel}
    \nu(\dd x)\,p_r(x,\dd y)
    =
    \nu(\dd y)\,p_r(y,\dd x),
    \qquad r\geq0,
\end{equation}
as an identity of measures on \(\Hh\times\Hh\); see~\cite[Eq.~(1.2) and Theorem~2.4]{MG02}.

The endpoint weight is integrable even when \(\eta\) is unbounded. Indeed,
invariance of \(\nu\) gives
\[
\begin{aligned}
    \widetilde{\E}_\eta
    \left[\frac{\eta(Z_t)}{\eta(Z_s)}\right]
    &=
    \int_{\Hh\times\Hh}
    \frac{\eta(y)}{\eta(x)}\eta(x)\nu(\dd x)p_{t-s}(x,\dd y)
     \\
    &=\int_{\Hh}\eta(y)\nu(\dd y)=1.
\end{aligned}
\]
Thus the left-hand side of~\eqref{eq:OU-time-reversal} is absolutely
integrable for every bounded \(\Phi\).

It is enough to verify~\eqref{eq:OU-time-reversal} for bounded cylindrical
path functionals. Fix
\[
    s=t_0<t_1<\cdots<t_n=t
\]
and let
\[
    \Phi(Z_\cdot)=\prod_{k=0}^n f_k(Z_{t_k}),
\]
where the \(f_k\) are bounded and measurable. Using the initial law
\(\eta\nu\), followed by repeated applications of~
\eqref{eq:OU-detailed-balance-kernel}, we obtain
\[
\begin{aligned}
&\widetilde{\E}_\eta
 \left[
    \prod_{k=0}^n f_k(Z_{t_k})
    \frac{\eta(Z_t)}{\eta(Z_s)}
 \right] \\
&\quad=
\int_{\Hh^{n+1}}
    \prod_{k=0}^n f_k(x_k)\,
    \frac{\eta(x_n)}{\eta(x_0)}
    \eta(x_0)\nu(\dd x_0)
    p_{t_1-t_0}(x_0,\dd x_1)\cdots
    p_{t_n-t_{n-1}}(x_{n-1},\dd x_n) \\
&\quad=
\int_{\Hh^{n+1}}
    \prod_{k=0}^n f_k(x_k)\,
    \eta(x_n)\nu(\dd x_n)
    p_{t_n-t_{n-1}}(x_n,\dd x_{n-1})\cdots
    p_{t_1-t_0}(x_1,\dd x_0).
\end{aligned}
\]

To identify the last integral, set
\[
    y_j:=x_{n-j},
    \qquad
    u_j:=s+t-t_{n-j},
    \qquad 0\leq j\leq n.
\]
Then \(s=u_0<u_1<\cdots<u_n=t\) and
\[
    u_j-u_{j-1}=t_{n-j+1}-t_{n-j}.
\]
After relabeling the variables, the preceding integral becomes
\[
\begin{aligned}
&\int_{\Hh^{n+1}}
    \eta(y_0)\nu(\dd y_0)
    \prod_{j=1}^n p_{u_j-u_{j-1}}(y_{j-1},\dd y_j)
    \prod_{k=0}^n f_k(y_{n-k}) \\
&\qquad=
\widetilde{\E}_\eta
\left[
    \prod_{k=0}^n f_k(Z_{s+t-t_k})
\right]
=
\widetilde{\E}_\eta
\left[
    \Phi(\Theta_{s,t}Z_\cdot)
\right].
\end{aligned}
\]
This proves~\eqref{eq:OU-time-reversal} for cylindrical functionals. Since
\(\Hh\) is separable, such functionals generate the Borel
\(\sigma\)-algebra of \(C([s,t];\Hh)\); the result for every bounded
measurable \(\Phi\) follows from the monotone class theorem.
\end{proof}

In the path-law comparison below, Proposition~\ref{prop:OU-time-reversal} is
applied with \(\eta=\rho\), because the nonlinear process is started from
its stationary law \(\mu=\rho\nu\).

\subsection{Gaussian divergence identities}
\label{subsec:gaussian-divergence}

The Gaussian integration-by-parts argument establishing
\(F\in\Dom(\divnu)\), together with the explicit formula for \(\divnu F\),
is deferred to Appendix~\ref{app:gaussian-divergence-calculation}. We only
record here the product rule needed below; its closed-divergence formulation
makes the required integrability explicit.

\begin{lemma}[Product rule for Gaussian divergence]
\label{lem:gaussian-divergence-product}
Let \(Y\in\Dom(\divnu)\), and let
\(\psi\in W^{1,2}(\Hh,\nu)\cap L^\infty(\nu)\). Assume that
\[
    \inner{Y}{\nabla\psi}\in L^2(\nu).
\]
Then \(\psi Y\in\Dom(\divnu)\) and
\begin{equation}\label{eq:gaussian-divergence-product}
    \divnu(\psi Y)
    =
    \psi\,\divnu Y+\inner{Y}{\nabla\psi}
\end{equation}
in \(L^2(\nu)\).
\end{lemma}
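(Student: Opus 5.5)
The plan is to verify Definition~\ref{def:gaussian-divergence} directly for \(\psi Y\), using a product rule for the closed gradient. First, \(\psi Y\in L^2(\nu;\Hh)\), since \(\psi\) is bounded and \(Y\in L^2(\nu;\Hh)\). The candidate divergence \(g:=\psi\,\divnu Y+\inner{Y}{\nabla\psi}\) lies in \(L^2(\nu)\): the first term because \(\psi\in L^\infty(\nu)\), the second by hypothesis. It therefore suffices to prove
\[
    \int_{\Hh}\inner{\psi Y}{\nabla\varphi}\,\dd\nu
    =-\int_{\Hh}\varphi\,g\,\dd\nu
\]
for every \(\varphi\) in a class dense in \(W^{1,2}(\Hh,\nu)\), and then extend by continuity. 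The left side is continuous in the \(W^{1,2}\)-norm because \(\psi Y\in L^2(\nu;\Hh)\), and the right side is continuous because \(g\in L^2(\nu)\).

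As test class I will take \(\varphi\in W^{1,2}(\Hh,\nu)\cap L^\infty(\nu)\), or concretely bounded smooth cylindrical functions with bounded derivatives, which are dense in \(W^{1,2}\). For such \(\varphi\), the product \(\varphi\psi\) is in \(W^{1,2}(\Hh,\nu)\cap L^\infty(\nu)\) with
\[
    \nabla(\varphi\psi)=\varphi\nabla\psi+\psi\nabla\varphi .
\]
This is the Leibniz rule for the closed Gaussian gradient. To prove it, approximate \(\psi\) in \(W^{1,2}\) by smooth cylindrical \(\psi_n\) that are uniformly bounded; this is possible by composing with a smooth truncation \(\chi\) equal to the identity on \([-\|\psi\|_\infty-1,\|\psi\|_\infty+1]\). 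For the \(\psi_n\) the rule is classical, and closedness of \(\nabla\) passes it to the limit, using dominated convergence along an a.e.-convergent subsequence for \(\varphi_n\nabla\psi\). Then apply the definition of \(\divnu Y\) with test function \(\varphi\psi\):
\[
    \int\inner{Y}{\psi\nabla\varphi}\,\dd\nu
    =\int\inner{Y}{\nabla(\varphi\psi)}\,\dd\nu-\int\varphi\inner{Y}{\nabla\psi}\,\dd\nu
    =-\int\varphi\psi\,\divnu Y\,\dd\nu-\int\varphi\inner{Y}{\nabla\psi}\,\dd\nu .
\]
Here \(\varphi\inner{Y}{\nabla\psi}\) is integrable because \(\varphi\) is bounded and \(\inner{Y}{\nabla\psi}\in L^2(\nu)\subset L^1(\nu)\). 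This is exactly the required identity on the test class.

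The main obstacle is the extension step. The definition requires the identity for all \(\varphi\in W^{1,2}\), not only bounded ones, and in the intermediate computation the test function must lie in \(W^{1,2}\) with controlled products. The hypothesis \(\inner{Y}{\nabla\psi}\in L^2(\nu)\) is what makes the final identity \(W^{1,2}\)-continuous in \(\varphi\). Without it, \(\inner{Y}{\nabla\psi}\) would only be in \(L^1(\nu)\) (as a product of two \(L^2\) fields), and density of bounded functions would not suffice. With it, density of \(W^{1,2}\cap L^\infty\) in \(W^{1,2}\) (via truncations \(\chi_n\circ\varphi\)) closes the argument. Uniqueness of \(\divnu(\psi Y)\) then gives \eqref{eq:gaussian-divergence-product} in \(L^2(\nu)\).
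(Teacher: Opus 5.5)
Your proposal is correct and follows essentially the same route as the paper. Both arguments verify the adjoint identity for bounded test functions \(\varphi\) by applying the definition of \(\divnu Y\) to \(\psi\varphi\), using \(\nabla(\psi\varphi)=\psi\nabla\varphi+\varphi\nabla\psi\), and then extend to all of \(W^{1,2}(\Hh,\nu)\) by density of bounded functions, which works because \(\psi Y\in L^2(\nu;\Hh)\) and \(\psi\,\divnu Y+\inner{Y}{\nabla\psi}\in L^2(\nu)\). The only difference is that you sketch the Leibniz rule for the closed gradient via cylindrical approximation and closedness, which the paper simply asserts.
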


\begin{proof}
Set
\[
    g:=\psi\,\divnu Y+\inner{Y}{\nabla\psi}.
\]
The assumptions imply that \(g\in L^2(\nu)\) and
\(\psi Y\in L^2(\nu;\Hh)\). Let
\(\varphi\in W^{1,2}(\Hh,\nu)\cap L^\infty(\nu)\). Then
\(\psi\varphi\in W^{1,2}(\Hh,\nu)\), with
\[
    \nabla(\psi\varphi)=\psi \nabla\varphi+\varphi \nabla\psi.
\]
Using the defining adjoint relation for \(\divnu Y\), we obtain
\[
\begin{aligned}
    \int_{\Hh}\inner{\psi Y}{\nabla\varphi}\,\dd\nu
    &=
    \int_{\Hh}\inner{Y}{\nabla(\psi\varphi)}\,\dd\nu
    -\int_{\Hh}\varphi\inner{Y}{\nabla\psi}\,\dd\nu \\
    &=
    -\int_{\Hh}\psi\varphi\,\divnu Y\,\dd\nu
    -\int_{\Hh}\varphi\inner{Y}{\nabla\psi}\,\dd\nu \\
    &=
    -\int_{\Hh}\varphi g\,\dd\nu.
\end{aligned}
\]
Bounded functions in \(W^{1,2}(\Hh,\nu)\) are dense in
\(W^{1,2}(\Hh,\nu)\), for example by scalar truncation
\cite[Chapter~5]{Bogachev1998}. Since both sides of
the last identity are continuous in the \(W^{1,2}\)-norm, the identity
extends to every \(\varphi\in W^{1,2}(\Hh,\nu)\). By
Definition~\ref{def:gaussian-divergence}, this proves that
\(\psi Y\in\Dom(\divnu)\) and \(\divnu(\psi Y)=g\).
\end{proof}

Under Assumption~\ref{ass:path-regularity}, the fixed representative
\(\rho\) and its gradient \(\nabla\rho=\rho \nabla\log\rho\) are bounded. Hence
\[
    \rho\,\divnu F\in L^2(\nu),
    \qquad
    \inner{F}{\nabla\rho}\in L^2(\nu).
\]
Applying Lemma~\ref{lem:gaussian-divergence-product} with \(Y=F\) and
\(\psi=\rho\) therefore gives the closed product identity
\begin{equation}\label{eq:rhoF-closed-product}
    \divnu(\rho F)
    =\rho\,\divnu F+\inner{F}{\nabla\rho}
    \quad\text{in }L^2(\nu).
\end{equation}
This explicit product identity refines the divergence-domain conclusion in
Lemma~\ref{lem:density-regularity}. Together with the stationary
identity~\eqref{eq:FP}, it will be used below to obtain the logarithmic
stationary Fokker--Planck equation.

\subsection{Time reversal of the Girsanov density}

By Assumption~\ref{ass:path-regularity}, on every finite interval
\([s,t]\) the stationary process admits the \(\Hh\)-valued semimartingale
decomposition, for \(s\leq r\leq t\),
\[
    X_r
    =
    X_s+\int_s^r\bigl(AX_u+F(X_u)\bigr)\,\dd u+M_r^X,
    \quad \text{with}
     ~M_r^X:=\int_s^rB\,\dd W_u.
\]
Reversing the Girsanov density requires replacing the left-point
discretization of the stochastic line integral
\(\int_s^t\inner{\theta(X_r)}{\dd X_r}\) by its right-point discretization.
For a partition \(\pi=\{s=t_0<\cdots<t_N=t\}\), the two sums are
\[
    \sum_{j=0}^{N-1}\inner{\theta(X_{t_j})}{X_{t_{j+1}}-X_{t_j}}
    \quad\text{and}\quad
    \sum_{j=0}^{N-1}\inner{\theta(X_{t_{j+1}})}{X_{t_{j+1}}-X_{t_j}}.
\]
Their difference is an increment-product sum for the pair
\((\theta(X),X)\). The next lemma identifies the limit of such sums for
general Hilbert-space semimartingales; it will subsequently be applied with
\(Y=\theta(X)\) and \(Z=X\).

This left--right correction serves here to reverse the stochastic line
integral and, together with the linear Ornstein--Uhlenbeck term, yields the Gaussian
divergence in~\eqref{divF}. This differs from the Onsager--Machlup small-tube
problem considered in~\cite{BardinaRoviraTindel2003}, despite the appearance
of a trace correction in both settings.

\begin{lemma}[Left--right Riemann-sum correction]
\label{lem:left-right-riemann-correction}
Let \(0\leq a<b<\infty\), and let \(Y\) and \(Z\) be continuous
\(\Hh\)-valued semimartingales on \([a,b]\) whose continuous
local-martingale parts satisfy, for \(a\leq r\leq b\),
\[
    M_r^Y-M_a^Y=\int_a^r K_u\,\dd W_u,
    \qquad
    M_r^Z-M_a^Z=\int_a^r L_u\,\dd W_u,
\]
where \(K\) and \(L\) are predictable
\(\mathcal L_2(\Uh,\Hh)\)-valued processes satisfying
\[
    \int_a^b
    \bigl(\|K_u\|_{\mathcal L_2(\Uh,\Hh)}^2
          +\|L_u\|_{\mathcal L_2(\Uh,\Hh)}^2\bigr)\,\dd u
    <\infty
    \qquad\text{almost surely.}
\]
Then,
for every sequence of deterministic partitions
\[
    \pi_n=\{a=t_0^n<t_1^n<\cdots<t_{N_n}^n=b\},
    \qquad \text{with mesh size}~|\pi_n|:=\max_{0\leq j\leq N_n-1}(t_{j+1}^n-t_j^n)\longrightarrow0,
\]
one has
\[
\begin{aligned}
\sum_{j=0}^{N_n-1}
\left\langle
    Y_{t_{j+1}^n}-Y_{t_j^n},
    Z_{t_{j+1}^n}-Z_{t_j^n}
\right\rangle_{\Hh} \longrightarrow
\int_a^b
\operatorname{Tr}_{\Uh}(K_u^*L_u)\,\dd u
\end{aligned}
\]
in probability as the mesh size tends to zero.
\end{lemma}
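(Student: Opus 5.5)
The plan is to reduce to the standard fact that realized covariation of Hilbert-space semimartingales converges to their tensor quadratic covariation. Decompose \(Y=Y_a+D^Y+M^Y\) and \(Z=Z_a+D^Z+M^Z\), where \(D^Y,D^Z\) are continuous adapted processes of finite variation with \(\Hh\)-valued paths and \(M^Y,M^Z\) are the stochastic integrals above. Expanding the inner product of increments bilinearly gives four sums. Each of the three sums involving at least one finite-variation increment is bounded pathwise by, for instance,
\[
    \max_j\|M^Y_{t_{j+1}}-M^Y_{t_j}\|_{\Hh}\cdot\operatorname{Var}_{[a,b]}(D^Z),
\]
or the analogous product. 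This bound tends to zero almost surely by uniform continuity of the paths on \([a,b]\). So it suffices to prove
\[
    \sum_j\langle \Delta_jM^Y,\Delta_jM^Z\rangle_{\Hh}\to\int_a^b\Tr_{\Uh}(K_u^*L_u)\,\dd u
\]
in probability.

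For the martingale part, I first localize. Define the stopping times
\[
    \tau_m:=\inf\Bigl\{r:\int_a^r(\|K_u\|_{\mathcal L_2}^2+\|L_u\|_{\mathcal L_2}^2)\,\dd u\geq m\Bigr\}\wedge b.
\]
Since \(\Pp(\tau_m<b)\to0\), convergence in probability for the stopped integrands \(K\mathbf 1_{[a,\tau_m]}\) and \(L\mathbf 1_{[a,\tau_m]}\) implies the claim. By polarization,
\[
    \langle x,y\rangle=\tfrac14\bigl(\|x+y\|^2-\|x-y\|^2\bigr),
\]
applied to the martingales \(M^Y\pm M^Z\) with integrands \(K\pm L\). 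Together with
\[
    \Tr(K^*L)=\tfrac14\bigl(\|K+L\|_{\mathcal L_2}^2-\|K-L\|_{\mathcal L_2}^2\bigr),
\]
this reduces the problem to a single martingale \(M=\int N\,\dd W\) with \(\int_a^b\|N\|_{\mathcal L_2}^2\leq m\). For such \(M\) I must show
\[
    \sum_j\|\Delta_jM\|^2\to\int_a^b\|N_u\|_{\mathcal L_2}^2\,\dd u.
\]

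To prove this, I apply the Hilbert-space It\^o formula (\cite[Chapter~4]{DZ92}) on each subinterval:
\[
    \|M_{t_{j+1}}-M_{t_j}\|^2=2\int_{t_j}^{t_{j+1}}\langle M_u-M_{t_j},N_u\,\dd W_u\rangle+\int_{t_j}^{t_{j+1}}\|N_u\|_{\mathcal L_2}^2\,\dd u.
\]
The error is therefore the stochastic integral \(2\int_a^b\langle M_u-M_{\pi(u)},N_u\,\dd W_u\rangle\), where \(\pi(u)\) is the left partition point. Its quadratic variation is bounded by
\[
    \int_a^b\|M_u-M_{\pi(u)}\|^2\|N_u\|_{\mathcal L_2}^2\,\dd u\leq\sup_{|u-v|\leq|\pi_n|}\|M_u-M_v\|^2\cdot m,
\]
which tends to zero almost surely by path continuity. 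By the Lenglart inequality (equivalently, the standard fact that stochastic integrals converge to zero in probability when the integrated quadratic variation does), the error tends to zero in probability.

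The main obstacle is this last step. It needs a uniform control that avoids assuming any moment bounds beyond almost-sure square integrability, which is why I localize first and use Lenglart's inequality rather than an \(L^2\) isometry. Once the localization is in place, the isometry also gives an \(L^2\) convergence on the stopped processes.
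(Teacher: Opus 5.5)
Your argument is correct, but it takes a different route from the paper's.

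\textbf{The paper's route.} The paper first localizes with a stopping time that controls \(\sup\|Y-Y_a\|_{\Hh}\), \(\sup\|Z-Z_a\|_{\Hh}\), the variations of the drift parts, and the integrated Hilbert--Schmidt norms. It then invokes the Hilbert-space It\^o product formula for \(\langle Y,Z\rangle_{\Hh}\). Next it proves that the left-point sums \(\sum_j\langle Y_{t_j},Z_{t_{j+1}}-Z_{t_j}\rangle_{\Hh}\) and their mirror images converge to \(\int\langle Y,\dd Z\rangle_{\Hh}\) and \(\int\langle Z,\dd Y\rangle_{\Hh}\). The drift part is handled pathwise; the martingale part uses the It\^o isometry with bounded dominated convergence. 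Finally it reads off the increment-product sum from the telescoping identity \(\langle Y_{t_{j+1}},Z_{t_{j+1}}\rangle_{\Hh}-\langle Y_{t_j},Z_{t_j}\rangle_{\Hh}=\langle Y_{t_j},\Delta_jZ\rangle_{\Hh}+\langle Z_{t_j},\Delta_jY\rangle_{\Hh}+\langle\Delta_jY,\Delta_jZ\rangle_{\Hh}\).

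\textbf{Your route.} You remove every term containing a finite-variation increment pathwise, directly at the level of the increments. You then polarize down to a single integral \(M=\int N\,\dd W\) and apply the squared-norm It\^o formula on each subinterval. This represents the error exactly as \(2\int_a^b\langle M_u-M_{\pi(u)},N_u\,\dd W_u\rangle\). Its bracket is at most \(4\sup_{|u-v|\le|\pi_n|}\|M_u-M_v\|_{\Hh}^2\int_a^b\|N_u\|_{\mathcal L_2}^2\,\dd u\), which tends to \(0\) almost surely.

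\textbf{What each buys.} Your argument is more direct: it needs only the It\^o formula for \(\|\cdot\|_{\Hh}^2\) of a stochastic integral, not the full product formula for semimartingales with drift. With Lenglart's inequality your localization is actually superfluous, because the bracket bound already tends to zero almost surely from the a.s.\ finiteness of \(\int\|N\|_{\mathcal L_2}^2\). Two small slips are harmless: the bound should be \(\int\|K\pm L\|_{\mathcal L_2}^2\le 2m\) rather than \(m\), and the bracket is missing a factor \(4\). The paper's route, in exchange, is phrased through the same left-point Riemann-sum convergence it needs anyway to construct the line integral \(I^-_{s,t}(\theta;X)\) in the subsequent reversal argument. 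Its stronger localization also lets it replace Lenglart's inequality by the It\^o isometry with a bounded dominating variable.
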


\begin{proof}
Write
\[
    Y_r=Y_a+A_r^Y+M_r^Y-M_a^Y,
    \qquad
    Z_r=Z_a+A_r^Z+M_r^Z-M_a^Z,
\]
where \(A_a^Y=A_a^Z=0\) and \(A^Y,A^Z\) are continuous adapted
\(\Hh\)-valued finite-variation processes. Since the product of two
Hilbert--Schmidt operators is trace class,
\[
\begin{aligned}
    \int_a^b\bigl|\Tr_{\Uh}(K_u^*L_u)\bigr|\,\dd u
    &\leq
    \left(\int_a^b\|K_u\|_{\mathcal L_2(\Uh,\Hh)}^2\,\dd u\right)^{1/2}
    \left(\int_a^b\|L_u\|_{\mathcal L_2(\Uh,\Hh)}^2\,\dd u\right)^{1/2}
    <\infty
\end{aligned}
\]
almost surely. Thus the trace integral is well defined.

For \(R>0\), define
\[
\begin{aligned}
    \Gamma_r&:={}
      \sup_{a\leq v\leq r}
      \bigl(\|Y_v-Y_a\|_{\Hh}+\|Z_v-Z_a\|_{\Hh}\bigr)
      +\operatorname{Var}_{[a,r]}(A^Y)
      +\operatorname{Var}_{[a,r]}(A^Z)\\
      &\qquad+
      \int_a^r
      \bigl(\|K_u\|_{\mathcal L_2}^2
            +\|L_u\|_{\mathcal L_2}^2\bigr)\,\dd u,
\end{aligned}
\]
and
\[
    \tau_R:=\inf\{r\in[a,b]:\Gamma_r\geq R\}\wedge b.
\]
Then \(\Pp(\tau_R<b)\to0\), as \(R\to\infty\). Set
\[
    Y^R_r:=Y_{r\wedge\tau_R},\quad Z^R_r:=Z_{r\wedge\tau_R},\qquad
    K^R_u:=\mathbf 1_{\{u\leq\tau_R\}}K_u,\quad
    L^R_u:=\mathbf 1_{\{u\leq\tau_R\}}L_u,
\]
and stop the finite-variation parts in the same way. Until the final step,
we work with these stopped processes and suppress the superscript \(R\). The
Hilbert-space It\^o product formula for the stopped pair gives
\begin{equation}\label{eq:hilbert-product-formula}
\begin{aligned}
    \langle Y_b,Z_b\rangle_{\Hh}-\langle Y_a,Z_a\rangle_{\Hh}
    =
      \int_a^b\langle Z_u,\dd Y_u\rangle_{\Hh}
      +\int_a^b\langle Y_u,\dd Z_u\rangle_{\Hh}+
      \int_a^b\Tr_{\Uh}(K_u^*L_u)\,\dd u .
\end{aligned}
\end{equation}
Indeed, for any orthonormal basis \((f_m)_{m\geq1}\) of \(\Uh\), the
quadratic-covariation term is
\[
    \sum_{m=1}^{\infty}
      \langle K_uf_m,L_uf_m\rangle_{\Hh}
    =\Tr_{\Uh}(K_u^*L_u).
\]
The series is absolutely convergent by the Hilbert--Schmidt
Cauchy--Schwarz inequality; see also the standard Hilbert-space product
formula in~\cite[Chapter~4]{DZ92}.

For \(u\in(t_j^n,t_{j+1}^n]\), define the predictable left-step
approximations
\[
    Y_u^{(n)}:=Y_{t_j^n},
    \qquad
    Z_u^{(n)}:=Z_{t_j^n},
    \qquad
    Y_a^{(n)}:=Y_a,\quad Z_a^{(n)}:=Z_a.
\]
Continuity and \(|\pi_n|\to0\) imply that \(Y^{(n)}\to Y\) and
\(Z^{(n)}\to Z\) uniformly on \([a,b]\), almost surely. For the first
left-point sum, the finite-variation error obeys
\[
    \left|
      \int_a^b\langle Y_u^{(n)}-Y_u,\dd A_u^Z\rangle_{\Hh}
    \right|
    \leq
    \sup_{a\leq u\leq b}\|Y_u^{(n)}-Y_u\|_{\Hh}
    \operatorname{Var}_{[a,b]}(A^Z)
    \longrightarrow0
\]
almost surely, while the It\^o isometry gives
\[
\begin{aligned}
    \E\left|
      \int_a^b
      \langle L_u^*(Y_u^{(n)}-Y_u),\dd W_u\rangle_{\Uh}
    \right|^2
    &\leq
    \E\left[
      \sup_{a\leq u\leq b}\|Y_u^{(n)}-Y_u\|_{\Hh}^2
      \int_a^b\|L_u\|_{\mathcal L_2}^2\,\dd u
    \right]
    \longrightarrow0
\end{aligned}
\]
by dominated convergence. Indeed, after stopping,
\[
    \sup_{a\leq u\leq b}\|Y_u^{(n)}-Y_u\|_{\Hh}\leq2R,
    \qquad
    \int_a^b\|L_u\|_{\mathcal L_2}^2\,\dd u\leq R,
\]
so the random variable in the last expectation is bounded by \(4R^3\).
Hence
\[
    \sum_{j=0}^{N_n-1}
      \langle Y_{t_j^n},Z_{t_{j+1}^n}-Z_{t_j^n}\rangle_{\Hh}
    \longrightarrow
    \int_a^b\langle Y_u,\dd Z_u\rangle_{\Hh}
\]
in probability for the stopped processes. Interchanging \(Y\) and \(Z\)
gives
\[
    \sum_{j=0}^{N_n-1}
      \langle Z_{t_j^n},Y_{t_{j+1}^n}-Y_{t_j^n}\rangle_{\Hh}
    \longrightarrow
    \int_a^b\langle Z_u,\dd Y_u\rangle_{\Hh}.
\]

Finally, summing the elementary identity
\[
\begin{aligned}
    \langle Y_{t_{j+1}^n},Z_{t_{j+1}^n}\rangle_{\Hh}
      -\langle Y_{t_j^n},Z_{t_j^n}\rangle_{\Hh}
    &=
      \langle Y_{t_j^n},Z_{t_{j+1}^n}-Z_{t_j^n}\rangle_{\Hh} \\
    &\quad+
      \langle Z_{t_j^n},Y_{t_{j+1}^n}-Y_{t_j^n}\rangle_{\Hh} \\
    &\quad+
      \langle
        Y_{t_{j+1}^n}-Y_{t_j^n},
        Z_{t_{j+1}^n}-Z_{t_j^n}
      \rangle_{\Hh}
\end{aligned}
\]
over \(j\), and comparing the limit with
\eqref{eq:hilbert-product-formula}, proves the assertion for the stopped
processes. If \(S_n\) and \(I\) denote the increment sum and trace integral
in the statement, and \(S_n^R,I^R\) their stopped versions, then
\[
    \limsup_{n\to\infty}\Pp(|S_n-I|>\varepsilon)
    \leq \Pp(\tau_R<b),
    \qquad \varepsilon>0.
\]
Letting \(R\to\infty\) proves the claim.
\end{proof}

We now apply the left--right correction to \(\theta(X)\) and combine it
with the Gaussian divergence formula to reverse the Girsanov density.

\begin{proposition}[Time reversal of the Girsanov density]
\label{prop:time-reversal-girsanov}
Let Assumptions~\ref{ass:AFQ},~\ref{ass:OU},~\ref{ass:density},
and~\ref{ass:path-regularity} hold, and let \(X\) denote the stationary strong
realization of Assumption~\ref{ass:path-regularity}. Throughout the proposition,
\(\divnu F\) denotes the Borel representative fixed
in~\eqref{eq:divF-borel-representative}. Set
\[
    G:=B^*\theta=B^\dagger F.
\]
Let \(Z_{s,t}\) be the Girsanov density
from~\eqref{eq:girsanov-density}. For every fixed \(0\leq s<t<\infty\), there
exists, for this stationary realization, a Borel function
\[
    \mathcal Z_{s,t}:C([s,t];\Hh)\to(0,\infty)
\]
such that \(\mathcal Z_{s,t}(X)=Z_{s,t}\) almost surely and
\begin{equation}\label{eq:thetaZ}
\begin{aligned}
    \bigl(\mathcal Z_{s,t}\circ\Theta_{s,t}\bigr)(X)
    =
    \exp\Bigg\{
        \int_s^t
        \langle G(X_r),\dd W_r\rangle_{\Uh} +
        \int_s^t
        \left[
            \divnu F(X_r)
            +\frac32\|G(X_r)\|_{\Uh}^2
        \right]\dd r
    \Bigg\}
\end{aligned}
\end{equation}
almost surely. We henceforth denote the left-hand side of
\eqref{eq:thetaZ} by \(\Theta_{s,t}^*Z_{s,t}\). Moreover,
\begin{equation}\label{eq:F-energy-control}
    \|G(x)\|_{\Uh}
    =
    \|F(x)\|_{\Q^{-1}},
    \qquad x\in\Hh.
\end{equation}
\end{proposition}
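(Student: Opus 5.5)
The plan is to express the Girsanov density through the path \(X\) alone, so that it becomes a Borel path functional, and then to evaluate that functional on the reversed path. Since \(F=BG=\Q\theta\) and \(G=B^*\theta\), we have
\[
\langle G(X_r),\dd W_r\rangle_{\Uh}=\inner{\theta(X_r)}{B\,\dd W_r}=\inner{\theta(X_r)}{\dd X_r-(AX_r+F(X_r))\,\dd r}.
\]
We also have \(\inner{\theta}{F}=\|G\|_{\Uh}^2\) and \(\inner{\theta}{AX_r}=\inner{A^*\theta}{X_r}\), where \(A^*\theta\) is bounded. Therefore
\[
\log Z_{s,t}=-\int_s^t\inner{\theta(X_r)}{\dd X_r}+\int_s^t\bigl[\inner{A^*\theta(X_r)}{X_r}+\tfrac12\|G(X_r)\|_{\Uh}^2\bigr]\dd r .
\]
The two Lebesgue integrals are continuous functionals of \(\omega\in C([s,t];\Hh)\). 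For the line integral, fix dyadic partitions \(\pi_n\) and choose a subsequence along which the left-point Riemann sums \(\sum_j\inner{\theta(X_{t_j})}{X_{t_{j+1}}-X_{t_j}}\) converge almost surely; convergence in probability holds by the argument in the proof of Lemma~\ref{lem:left-right-riemann-correction}. Define \(\mathcal Z_{s,t}(\omega)\) as the exponential of \(-\limsup\) of these Riemann sums plus the continuous integrals, and set it to \(1\) where this limsup is not finite. This gives a Borel map with \(\mathcal Z_{s,t}(X)=Z_{s,t}\) almost surely. Identity~\eqref{eq:F-energy-control} is immediate from Remark~\ref{rem:girsanov-admissibility}.

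Next we evaluate the functional on \(\Theta_{s,t}X\). The Lebesgue integrals are invariant under \(r\mapsto s+t-r\). The left-point sums for the reversed path are exactly minus the right-point sums \(\sum_j\inner{\theta(X_{t_{j+1}})}{X_{t_{j+1}}-X_{t_j}}\) of the original path on the reflected partition. We choose the partitions to be symmetric, so the reflected partition coincides with the original one, and we pass to a further subsequence so that both sums converge almost surely. The difference between the right-point and left-point sums is the increment-product sum for \((\theta(X),X)\). We apply Lemma~\ref{lem:left-right-riemann-correction} with \(Y=\theta(X)\) and \(Z=X\). By the Itô formula for \(\theta\in C_b^2\), \(Y\) is a semimartingale with martingale part \(K_u=\nabla\theta(X_u)B\), and \(L_u=B\). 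This yields the correction \(\int_s^t\Tr(B^*\nabla\theta(X_u)B)\,\dd u=\int_s^t\Tr(\Q\nabla\theta(X_u))\,\dd u\). Hence the right-point line integral equals \(\int\inner{\theta}{\dd X}+\int\Tr(\Q\nabla\theta)\,\dd r\), and we obtain
\[
\log(\Theta^*_{s,t}Z_{s,t})=\int_s^t\inner{\theta}{\dd X_r}+\int_s^t\Tr(\Q\nabla\theta)\,\dd r+\int_s^t\bigl[\inner{A^*\theta}{X_r}+\tfrac12\|G\|^2\bigr]\dd r .
\]

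Finally, we substitute \(\inner{\theta}{\dd X}=\langle G,\dd W\rangle_{\Uh}+\inner{A^*\theta}{X}\,\dd r+\|G\|^2\,\dd r\). This gives the drift \(\Tr(\Q\nabla\theta)+2\inner{A^*\theta(x)}{x}+\tfrac32\|G\|^2\). The first two terms are precisely the Borel representative \eqref{eq:divF-borel-representative} of \(\divnu F\) from the appendix, as in \eqref{eq:example-gaussian-divergence}, and this yields \eqref{eq:thetaZ}.

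The main obstacle is the measurable-functional step: making \(\mathcal Z_{s,t}\) a single Borel map that represents both \(Z_{s,t}\) on the forward path and the right-point line integral on the reversed path. This requires a common subsequence of symmetric partitions with almost-sure convergence of both the left- and right-point sums. It also requires checking that \(\theta(X)\) meets the hypotheses of Lemma~\ref{lem:left-right-riemann-correction}. The Itô drift of \(\theta(X)\) involves \(\nabla\theta\,AX\) and a trace term, and it has finite variation because \(\int\|AX_r\|\,\dd r<\infty\) almost surely.
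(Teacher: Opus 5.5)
Your proposal is correct and follows essentially the same route as the paper. You rewrite $\log Z_{s,t}$ through the left-point line integral $\int_s^t\inner{\theta(X_r)}{\dd X_r}$, build the Borel functional from an almost surely convergent subsequence of reflection-invariant dyadic Riemann sums, use $S_n^-(\theta;\Theta_{s,t}\omega)=-S_n^+(\theta;\omega)$ together with Lemma~\ref{lem:left-right-riemann-correction} (with $K=\nabla\theta(X)B$, $L=B$) to get the $\Tr(\Q\nabla\theta)$ correction, and identify the drift with the representative~\eqref{eq:divF-borel-representative}. The one thing to tidy is the ordering issue you already flag: fix the common subsequence along which both the left- and right-point sums converge almost surely before defining $\mathcal Z_{s,t}$, as the paper does.
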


\begin{proof}
Fix \(s<t\) and use the reflection-invariant dyadic partitions
\[
    t_j^n:=s+\frac{j}{2^n}(t-s),
    \qquad
    0\leq j\leq2^n.
\]
For \(\omega\in C([s,t];\Hh)\), define the Borel left- and right-point sums
\[
\begin{aligned}
    S_n^-(\theta;\omega)
    &:=
    \sum_{j=0}^{2^n-1}
    \inner{\theta(\omega_{t_j^n})}
          {\omega_{t_{j+1}^n}-\omega_{t_j^n}}, \\
    S_n^+(\theta;\omega)
    &:=
    \sum_{j=0}^{2^n-1}
    \inner{\theta(\omega_{t_{j+1}^n})}
          {\omega_{t_{j+1}^n}-\omega_{t_j^n}}.
\end{aligned}
\]
Since \(X\) is a continuous \(\Hh\)-valued semimartingale and
\(\theta\in C_b^2(\Hh;\Hh)\), the left-point sums converge in probability
to
\[
    I_{s,t}^-(\theta;X)
    :=
    \int_s^t\inner{\theta(X_r)}{\dd X_r}.
\]
This is the standard Riemann-sum construction of the Hilbert-space It\^o
integral \cite[Chapter~4]{DZ92}.
By the strong semimartingale decomposition and \(G=B^*\theta\),
\begin{equation}\label{eq:left-line-integral}
\begin{aligned}
    I_{s,t}^-(\theta;X)
    =
    \int_s^t
    \left[
        \inner{A^*\theta(X_r)}{X_r}
        +\inner{\theta(X_r)}{F(X_r)}
    \right]\dd r+
    \int_s^t
    \langle G(X_r),\dd W_r\rangle_{\Uh}.
\end{aligned}
\end{equation}
Since \(X_r\in\Dom(A)\) for
\(\dd r\otimes\Pp\)-almost every \((r,\omega)\) and
\(\theta(X_r)\in\Dom(A^*)\), operator duality gives
\[
    \inner{\theta(X_r)}{AX_r}
    =\inner{A^*\theta(X_r)}{X_r}
\]
for \(\dd r\otimes\Pp\)-almost every \((r,\omega)\). Both sides are
time integrable by the boundedness of \(\theta\) and \(A^*\theta\), the
local integrability of \(AX\), and the continuity of \(X\).

The difference between the right- and left-point sums is
\[
\begin{aligned}
    S_n^+(\theta;X)-S_n^-(\theta;X)
    =
    \sum_{j=0}^{2^n-1}
    \inner{
        \theta(X_{t_{j+1}^n})-\theta(X_{t_j^n})
    }{
        X_{t_{j+1}^n}-X_{t_j^n}
    }.
\end{aligned}
\]
Let \((f_j)_{j\geq1}\) and \((e_k)_{k\geq1}\) be orthonormal bases of
\(\Uh\) and \(\Hh\), respectively. The Hilbert-space It\^o formula gives
\[
\begin{aligned}
    \theta(X_r)
    =
    \theta(X_s)
    +\int_s^r\nabla\theta(X_u)\bigl(AX_u+F(X_u)\bigr)\,\dd u 
    +\frac12\int_s^r
       \sum_{j=1}^{\infty}
       \nabla^2\theta(X_u)[Bf_j,Bf_j]\,\dd u
    +M_r^\theta,
\end{aligned}
\]
where
\[
    M_r^\theta
    :=\int_s^r\nabla\theta(X_u)B\,\dd W_u.
\]
The second-order series is absolutely convergent in \(\Hh\), uniformly in
its state argument, because
\[
    \sum_{j=1}^{\infty}
    \bigl\|\nabla^2\theta(x)[Bf_j,Bf_j]\bigr\|_{\Hh}
    \leq
    \|\nabla^2\theta\|_\infty
    \|B\|_{\mathcal L_2(\Uh,\Hh)}^2.
\]
Moreover,
\[
    \int_s^t
    \|\nabla\theta(X_u)B\|_{\mathcal L_2(\Uh,\Hh)}^2\,\dd u
    \leq
    (t-s)\|\nabla\theta\|_\infty^2
    \|B\|_{\mathcal L_2(\Uh,\Hh)}^2,
\]
so \(M^\theta\) is a continuous square-integrable \(\Hh\)-valued
martingale.

For the present pair of continuous martingales, define the scalar trace
covariation by
\[
    [M^\theta,M^X]^{\mathrm{tr}}_r
    :=
    \lim_{m\to\infty}
    \sum_{k=1}^m
    \bigl[
       \inner{M^\theta}{e_k},
       \inner{M^X}{e_k}
    \bigr]_r,
\]
where the limit is uniform on compact time intervals in probability. The
limit is basis independent. A coordinatewise covariation calculation gives
\[
\begin{aligned}
    [M^\theta,M^X]^{\mathrm{tr}}_r
    &=
    \int_s^r\sum_{j=1}^{\infty}
    \inner{\nabla\theta(X_u)Bf_j}{Bf_j}\,\dd u \\
    &=
    \int_s^r
    \Tr_{\Uh}\!\left(B^*\nabla\theta(X_u)B\right)\dd u \\
    &=
    \int_s^r
    \Tr_{\Hh}\!\left(\Q \nabla\theta(X_u)\right)\dd u.
\end{aligned}
\]
Indeed,
\[
    \left|
        \Tr_{\Uh}\!\left(B^*\nabla\theta(x)B\right)
    \right|
    \leq
    \|\nabla\theta\|_\infty
    \|B\|_{\mathcal L_2(\Uh,\Hh)}^2.
\]
Applying Lemma~\ref{lem:left-right-riemann-correction} with
\[
    Y=\theta(X),
    \qquad Z=X,
    \qquad K=\nabla\theta(X)B,
    \qquad L=B,
\]
is legitimate by the estimates above. Since the Hilbert spaces are real,
\[
\begin{aligned}
    \Tr_{\Uh}\!\left((\nabla\theta(X_u)B)^*B\right)
    &=\sum_{j=1}^{\infty}
      \inner{\nabla\theta(X_u)Bf_j}{Bf_j} \\
    &=\Tr_{\Uh}\!\left(B^*\nabla\theta(X_u)B\right).
\end{aligned}
\]
Thus lemma \ref{lem:left-right-riemann-correction} shows that the difference of the right- and left-point sums
converges in probability to the trace term above. Consequently,
\begin{equation}\label{eq:left-right-line-integrals}
    I_{s,t}^+(\theta;X)
    :=
    \lim_{n\to\infty}S_n^+(\theta;X)
    =
    I_{s,t}^-(\theta;X)
    +
    \int_s^t
    \Tr\!\left(\Q \nabla\theta(X_r)\right)\dd r
\end{equation}
in probability. Choose a deterministic subsequence, not relabeled, along
which both \(S_n^-(\theta;X)\) and \(S_n^+(\theta;X)\) converge almost
surely. This subsequence is fixed for the stationary path law under
consideration. Define
\[
\begin{aligned}
    \mathcal I_{s,t}^-(\omega)
    &:=
    \lim_{n\to\infty}S_n^-(\theta;\omega), \\
    \mathcal I_{s,t}^+(\omega)
    &:=
    \lim_{n\to\infty}S_n^+(\theta;\omega)
\end{aligned}
\]
on the Borel set where the respective limits exist, and set them equal to
zero elsewhere. These are Borel path functionals and
\[
    \mathcal I_{s,t}^\pm(X)=I_{s,t}^\pm(\theta;X)
    \qquad\text{almost surely}.
\]
The dyadic partitions are invariant under \(r\mapsto s+t-r\), and direct
reindexing gives, for every path \(\omega\),
\[
    S_n^-(\theta;\Theta_{s,t}\omega)
    =
    -S_n^+(\theta;\omega).
\]
Taking limits along the chosen subsequence and using~
\eqref{eq:left-right-line-integrals} yields, almost surely,
\begin{equation}\label{eq:left-integral-reversal}
\begin{aligned}
    \mathcal I_{s,t}^-(\Theta_{s,t}X)
    =
    -\mathcal I_{s,t}^-(X)-
    \int_s^t
    \Tr\!\left(\Q \nabla\theta(X_r)\right)\dd r
\end{aligned}
\end{equation}

Since \(F=\Q\theta=BB^*\theta\), one has
\[
    \inner{\theta}{F}
    =
    \langle B^*\theta,B^*\theta\rangle_{\Uh}
    =
    \|G\|_{\Uh}^2.
\]
Thus~\eqref{eq:left-line-integral} implies
\[
\begin{aligned}
    \int_s^t\langle G(X_r),\dd W_r\rangle_{\Uh}
    =
    I_{s,t}^-(\theta;X)-\int_s^t\inner{A^*\theta(X_r)}{X_r}\,\dd r-
    \int_s^t\|G(X_r)\|_{\Uh}^2\,\dd r.
\end{aligned}
\]
Substituting this identity into~\eqref{eq:girsanov-density} gives, almost
surely,
\begin{equation}\label{eq:canonical-logZ}
\begin{aligned}
    \log Z_{s,t}
    =
    -\mathcal I_{s,t}^-(X)
    +\int_s^t\inner{A^*\theta(X_r)}{X_r}\,\dd r +
    \frac12\int_s^t\|G(X_r)\|_{\Uh}^2\,\dd r.
\end{aligned}
\end{equation}
Define the Borel state-path functional
\[
\begin{aligned}
    \log\mathcal Z_{s,t}(\omega)
    :=
    -\mathcal I_{s,t}^-(\omega)
    +\int_s^t
      \inner{A^*\theta(\omega_r)}{\omega_r}\,\dd r +
    \frac12\int_s^t\|G(\omega_r)\|_{\Uh}^2\,\dd r.
\end{aligned}
\]
Then \(\mathcal Z_{s,t}(X)=Z_{s,t}\) almost surely by~
\eqref{eq:canonical-logZ}. The ordinary time integrals are invariant under
\(\Theta_{s,t}\). Applying~\eqref{eq:left-integral-reversal} and then
using~\eqref{eq:left-line-integral} once more yields
\[
\begin{aligned}
    \log\bigl(\mathcal Z_{s,t}(\Theta_{s,t}X)\bigr)
    &=
    \int_s^t\langle G(X_r),\dd W_r\rangle_{\Uh} \\
    &\quad+
    \int_s^t
    \left[
        \Tr\!\left(\Q \nabla\theta(X_r)\right)
        +2\inner{A^*\theta(X_r)}{X_r}
        +\frac32\|G(X_r)\|_{\Uh}^2
    \right]\dd r.
\end{aligned}
\]
The first two finite-variation terms equal
\(\divnu F(X_r)\) by~\eqref{divF}; exponentiation proves~
\eqref{eq:thetaZ}.

Finally, \(G=B^*\theta\in(\ker B)^\perp\) and \(BG=F\), so \(G\) is the
minimal-norm control representing \(F\). Definition~\ref{def:CM-space}
therefore gives~\eqref{eq:F-energy-control}.
\end{proof}

\subsection{Path-law ratio}

Combining the reversed Girsanov density with reversibility of the Ornstein--Uhlenbeck
reference gives the finite-time likelihood ratio needed below.

\begin{proposition}[Forward-backward path-law ratio]
\label{prop:path-ratio}
Under Assumptions~\ref{ass:AFQ},~\ref{ass:OU},
\ref{ass:density}, and~\ref{ass:path-regularity}, for
\(0\le s<t<\infty\), let \(X\) be the stationary strong realization of
Assumption~\ref{ass:path-regularity}, so that \(X_s\sim\mu\), and let
\(\Omega_{s,t}:=C([s,t];\Hh)\). Then
\begin{equation}
\label{eq:path-ratio}
    \frac{\dd\Pp^-_{[s,t]}}
         {\dd\Pp^+_{[s,t]}}(\omega)
    =
    \frac{\mathcal Z_{s,t}(\omega)}
         {\mathcal Z_{s,t}(\Theta_{s,t}\omega)}
    \frac{\rho(\omega_t)}{\rho(\omega_s)}
\end{equation}
for \(\Pp^+_{[s,t]}\)-almost every \(\omega\in\Omega_{s,t}\), where
\(\mathcal Z_{s,t}\) is the Borel path functional constructed in
Proposition~\ref{prop:time-reversal-girsanov}.
\end{proposition}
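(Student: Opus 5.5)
The plan is to test both sides of \eqref{eq:path-ratio} against an arbitrary bounded Borel path functional \(\Psi:\Omega_{s,t}\to\R\). We show
\[
    \E\bigl[\Psi(\Theta_{s,t}X)\bigr]
    =
    \E\!\left[\Psi(X)\,\frac{\mathcal Z_{s,t}(X)}{\mathcal Z_{s,t}(\Theta_{s,t}X)}\,\frac{\rho(X_t)}{\rho(X_s)}\right],
\]
whose left-hand side is \(\int\Psi\,\dd\Pp^-_{[s,t]}\). Since the weight is a strictly positive Borel functional of the path, this identity identifies the Radon--Nikodym derivative \(\Pp^+_{[s,t]}\)-a.s. (and also gives \(\Pp^-\ll\Pp^+\)). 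The route is: Girsanov to the reference process, Ornstein--Uhlenbeck reversal there, then Girsanov back.

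\textbf{Step 1 (forward Girsanov).} Apply Lemma~\ref{lem:girsanov} on \([s,t]\) with \(G=B^*\theta\). The admissibility of this choice is recorded in Remark~\ref{rem:girsanov-admissibility}. Under \(\widetilde\Pp=Z_{s,t}\Pp\), the path \(X\) is a strong solution of the Ornstein--Uhlenbeck equation with \(X_s\sim\mu=\rho\nu\). By uniqueness in law for the linear equation, the law of \(X|_{[s,t]}\) under \(\widetilde\Pp\) equals the Ornstein--Uhlenbeck law started from \(\rho\nu\), whose expectation we denote \(\widetilde\E_\rho\). Since \(\mathcal Z_{s,t}(X)=Z_{s,t}>0\) a.s. by Proposition~\ref{prop:time-reversal-girsanov}, for any nonnegative Borel functional \(\Xi\) we have
\[
    \E[\Xi(X)]=\widetilde\E_\rho\bigl[\Xi(Z)\,\mathcal Z_{s,t}(Z)^{-1}\bigr].
\]
Here it matters that \(\mathcal Z_{s,t}\) is a genuine Borel functional of the path, so that it transfers between laws. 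The a.s. identity \(\mathcal Z_{s,t}(X)=Z_{s,t}\) holds under \(\Pp\), hence under the equivalent measure \(\widetilde\Pp\), and so \(\widetilde\Pp\)-a.s. on the reference path.

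\textbf{Step 2 (reversal of the reference law).} Take
\[
    \Xi(\omega)=\Psi(\omega)\,\frac{\rho(\omega_t)}{\rho(\omega_s)}\,\mathcal Z_{s,t}(\omega)\,\mathcal Z_{s,t}(\Theta_{s,t}\omega)^{-1}.
\]
The right-hand side of the target identity then becomes
\[
    \widetilde\E_\rho\!\left[\Psi(Z)\,\mathcal Z_{s,t}(\Theta_{s,t}Z)^{-1}\,\frac{\rho(Z_t)}{\rho(Z_s)}\right].
\]
Apply Proposition~\ref{prop:OU-time-reversal} with \(\eta=\rho\) to the functional \(\omega\mapsto\Psi(\Theta_{s,t}\omega)\,\mathcal Z_{s,t}(\omega)^{-1}\), using \(\Theta_{s,t}^2=\mathrm{id}\) and relabeling. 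This turns the expression into \(\widetilde\E_\rho[\Psi(\Theta_{s,t}Z)\,\mathcal Z_{s,t}(Z)^{-1}]\). Step 1 run backwards then gives \(\E[\Psi(\Theta_{s,t}X)]\), as required.

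\textbf{Main obstacle.} Proposition~\ref{prop:OU-time-reversal} is stated only for bounded \(\Phi\), while \(\mathcal Z_{s,t}^{-1}\) need not be bounded. I would handle this by truncation. Apply \eqref{eq:OU-time-reversal} to \(\Phi_N:=\Phi\wedge N\) for nonnegative \(\Psi\), and pass to the limit by monotone convergence on both sides. Both sides are sums of nonnegative terms, so no integrability is needed beyond finiteness of the limit, and general bounded \(\Psi\) follows by splitting into positive and negative parts. Finiteness of the limit is guaranteed because, with \(\Psi\equiv1\), the chain of equalities yields total mass \(\Pp^-_{[s,t]}(\Omega_{s,t})=1\). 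A secondary point to check is that \(\mathcal Z_{s,t}(\Theta_{s,t}\cdot)\) is finite and positive \(\widetilde\Pp\)-a.s.; this is supplied by \eqref{eq:thetaZ}.
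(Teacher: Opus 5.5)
Your proposal is correct and follows the paper's proof essentially verbatim. It uses the same three steps: Girsanov to the Ornstein--Uhlenbeck law started from $\rho\nu$ (the paper writes this as $\dd R_{[s,t]}/\dd\Pp^+_{[s,t]}=\mathcal Z_{s,t}$), reversal via Proposition~\ref{prop:OU-time-reversal} with $\eta=\rho$ extended to nonnegative functionals by monotone convergence, and Girsanov back. The only slip is in naming the functional to insert into \eqref{eq:OU-time-reversal}: it should be $\omega\mapsto\Psi(\omega)\,\mathcal Z_{s,t}(\Theta_{s,t}\omega)^{-1}$, and the functional $\Psi(\Theta_{s,t}\omega)\,\mathcal Z_{s,t}(\omega)^{-1}$ you wrote is its reversal; your resulting expression $\widetilde\E_\rho[\Psi(\Theta_{s,t}Z)\,\mathcal Z_{s,t}(Z)^{-1}]$ is nonetheless correct.
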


\begin{proof}
Let \(R_{[s,t]}\) denote the law on \(\Omega_{s,t}\) of \(X\) under the
Girsanov-transformed measure \(\widetilde{\Pp}\). Lemma~\ref{lem:girsanov}
and Proposition~\ref{prop:time-reversal-girsanov} give
\begin{equation}\label{eq:OU-path-law-density}
    \frac{\dd R_{[s,t]}}{\dd\Pp^+_{[s,t]}}(\omega)
    =\mathcal Z_{s,t}(\omega).
\end{equation}
Under \(R_{[s,t]}\), the coordinate process is the reversible Ornstein--Uhlenbeck reference
process started from \(\mu=\rho\nu\).

Identity~\eqref{eq:OU-time-reversal} extends from bounded functionals to
nonnegative measurable functionals by monotone convergence. We may
therefore first take \(\Phi\geq0\) and define
\[
    \Psi(\omega)
    :=
    \frac{\Phi(\omega)}{\mathcal Z_{s,t}(\Theta_{s,t}\omega)}.
\]
Since \(\Theta_{s,t}^2\) is the identity,
Proposition~\ref{prop:OU-time-reversal}, applied under \(R_{[s,t]}\) to
\(\Psi\), gives
\[
\begin{aligned}
    \int_{\Omega_{s,t}}\Phi\,\dd\Pp^-_{[s,t]}
    &=\int_{\Omega_{s,t}}
      \frac{\Phi(\Theta_{s,t}\omega)}
           {\mathcal Z_{s,t}(\omega)}\,R_{[s,t]}(\dd\omega) \\
    &=\int_{\Omega_{s,t}}
      \frac{\Phi(\omega)}
           {\mathcal Z_{s,t}(\Theta_{s,t}\omega)}
      \frac{\rho(\omega_t)}{\rho(\omega_s)}
      \,R_{[s,t]}(\dd\omega) \\
    &=\int_{\Omega_{s,t}}\Phi(\omega)
      \frac{\mathcal Z_{s,t}(\omega)}
           {\mathcal Z_{s,t}(\Theta_{s,t}\omega)}
      \frac{\rho(\omega_t)}{\rho(\omega_s)}
      \,\Pp^+_{[s,t]}(\dd\omega),
\end{aligned}
\]
where the first equality uses the definition of \(\Pp^-_{[s,t]}\) and~
\eqref{eq:OU-path-law-density}, and the last uses~
\eqref{eq:OU-path-law-density} once more.
Applying the argument to the positive and negative parts gives the same
identity for every bounded signed \(\Phi\), and hence~
\eqref{eq:path-ratio}. Its right-hand side is strictly positive and finite
\(\Pp^+_{[s,t]}\)-almost surely, so the two path laws are equivalent.
\end{proof}

Formula~\eqref{eq:path-ratio} has the same endpoint-plus-action structure as
the path-space identities used in finite-dimensional entropy-production
theory~\cite{QW99,LebowitzSpohn1999}. Here the endpoint density is taken
relative to the Gaussian reference measure \(\nu\), and the stochastic
action is expressed in the noise Cameron--Martin directions; no
infinite-dimensional analogue of Lebesgue measure is invoked.

\subsection{Proof of the entropy production formula}
\label{subsec:entropy-production-proof}

\begin{proof}[Proof of Theorem~\ref{thm:main}]
We first derive the logarithmic stationary equation. By
Lemma~\ref{lem:density-regularity}, \(\rho\) and \(\log\rho\) satisfy the closed
generator chain rule~\eqref{eq:L0-rho-chain-rule} and
\(\nabla\rho=\rho \nabla\log\rho\).
By~\eqref{eq:rhoF-closed-product} and this gradient identity,
\[
    \divnu(\rho F)
    =\rho\left(\divnu F+\inner{F}{\nabla\log\rho}\right)
    \quad\text{in }L^2(\nu).
\]
Combining this identity with~\eqref{eq:L0-rho-chain-rule} and the closed
stationary equation~\eqref{eq:FP} gives
\begin{equation}
\label{eq:logFP-final}
    \Lzero(\log\rho)
    +\frac12\|B^*\nabla\log\rho\|_{\Uh}^2
    -\divnu F
    -\inner{F}{\nabla\log\rho}=0
    \quad\text{in }L^2(\nu).
\end{equation}
Indeed, division by \(\rho\) is legitimate because
\(e^{-\|\log\rho\|_\infty}\leq\rho\leq
e^{\|\log\rho\|_\infty}\).
The same bounds show that~\eqref{eq:logFP-final} also holds in
\(L^2(\mu)\). After fixing Borel representatives, stationarity and
Fubini's theorem imply that it holds at \(X_r\) for
\(\dd r\otimes\Pp\)-almost every \((r,\omega)\); hence it may be used
inside the time integrals below.

Set
\[
    G:=B^*\theta,
    \qquad
    \Gamma:=2G-B^*\nabla\log\rho=B^*(2\theta-\nabla\log\rho),
    \qquad
    \widehat V:=2F-\Q \nabla\log\rho.
\]
Then \(B\Gamma=\widehat V\) and
\(\Gamma(x)\in(\ker B)^\perp\). Hence \(\Gamma(x)\) is the unique
minimal-norm control representing \(\widehat V(x)\), and
\begin{equation}\label{eq:irreversible-control-energy}
    \|\Gamma(x)\|_{\Uh}^2
    =\|\widehat V(x)\|_{\Q^{-1}}^2,
    \qquad x\in\Hh.
\end{equation}
By~\eqref{eq:log-gradient-consistency}, \(\widehat V=V\)
\(\mu\)-almost everywhere. Moreover,
\[
    g(x):=\|\Gamma(x)\|_{\Uh}^2
         =\|\widehat V(x)\|_{\Q^{-1}}^2
\]
defines a function \(g\in C_b(\Hh)\).

Fix \(t\geq0\) and \(h>0\). Proposition~\ref{prop:path-ratio}, together
with~\eqref{eq:girsanov-density} and~\eqref{eq:thetaZ}, gives
\[
\begin{aligned}
    \log\left(\frac{\dd\Pp^-_{[t,t+h]}}
    {\dd\Pp^+_{[t,t+h]}}\right)(X)
    &=-2\int_t^{t+h}\langle G(X_r),\dd W_r\rangle_{\Uh} \\
    &\quad-\int_t^{t+h}
      \bigl[2\|G\|_{\Uh}^2+\divnu F\bigr](X_r)\,\dd r
      +\log\rho(X_{t+h})-\log\rho(X_t).
\end{aligned}
\]
Using the It\^o identity~\eqref{eq:logrho-ito} and then
\eqref{eq:logFP-final}, the finite-variation coefficient becomes
\[
\begin{aligned}
    -2\|G\|_{\Uh}^2-\divnu F
      +\Lzero(\log\rho)+\inner{F}{\nabla\log\rho} &=-2\|G\|_{\Uh}^2+2\inner{F}{\nabla\log\rho}
      -\frac12\|B^*\nabla\log\rho\|_{\Uh}^2\\
      &=-\frac12\|\Gamma\|_{\Uh}^2.
\end{aligned}
\]
Here
\(\langle G,B^*\nabla\log\rho\rangle_{\Uh}
  =\langle F,\nabla\log\rho\rangle_{\Hh}\).
After inverting the likelihood ratio, we obtain
\[
    \log\left(\frac{\dd\Pp^+_{[t,t+h]}}{\dd\Pp^-_{[t,t+h]}}\right)(X)
    =\int_t^{t+h}\langle\Gamma(X_r),\dd W_r\rangle_{\Uh}
     +\frac12\int_t^{t+h}g(X_r)\,\dd r.
\]
Because \(\Gamma\) is bounded, the stochastic integral is an
\(L^2\)-martingale and the logarithmic likelihood ratio is integrable.
Since the law of \(X\) is \(\Pp^+\), conditioning the preceding identity
and using the Markov property from
Remark~\ref{rem:girsanov-admissibility} gives, for the fixed Borel Markov
version,
\begin{equation}\label{eq:conditional-entropy-average}
    \frac1h\E_{\Pp^+_{[t,t+h]}}
    \left[
      \left.
      \log\left(\frac{\dd\Pp^+_{[t,t+h]}}
    {\dd\Pp^-_{[t,t+h]}}\right)
      \right|X_t=x
    \right]
    =
    \frac1{2h}\int_0^hP_rg(x)\,\dd r,
    \qquad \mu\text{-a.e. }x.
\end{equation}

It remains to identify the short-time limit. Let \(X^x\) be the mild
solution started from \(x\), and set
\(C:=\sup_{0\leq r\leq1}\|S(r)\|<\infty\). The standard mild-solution
estimate and It\^o's isometry~\cite{DZ92} give, for \(0<r\leq1\),
\[
\begin{aligned}
    \E\|X_r^x-x\|_{\Hh}^2
    &\leq3\|S(r)x-x\|_{\Hh}^2
      +3C^2r^2\|F\|_\infty^2
      +3C^2r\|B\|_{\mathcal L_2(\Uh,\Hh)}^2
      \longrightarrow0.
\end{aligned}
\]
Thus boundedness and continuity of \(g\) yield
\[
    P_rg(x)=\E[g(X_r^x)]\longrightarrow g(x).
\]
Taking the limit in~\eqref{eq:conditional-entropy-average} proves
\[
    \ep(t,x)=\ep(x)
    =
    \frac12g(x)
    =
    \frac12
    \|2F(x)-\Q \nabla\log\rho(x)\|_{\Q^{-1}}^2
    \qquad\text{for }\mu\text{-a.e. }x,
\]
which is~\eqref{eq:ep-density-main}.

Finally, stationarity and the zero mean of the martingale term give
\[
\begin{aligned}
\mathcal H\!\left(
    \Pp^+_{[t,t+h]}
    \,\middle|\,
    \Pp^-_{[t,t+h]}
\right)
&=\frac12\E\int_t^{t+h}g(X_r)\,\dd r
 =\frac h2\int_{\Hh}g(x)\,\mu(\dd x).
\end{aligned}
\]
Dividing by \(h\) and using \(\widehat V=V\) \(\mu\)-almost everywhere
proves
\[
    \EP(t)=\EP
    =
    \frac12
    \int_{\Hh}
    \|2F(x)-\Q \nabla\log\rho(x)\|_{\Q^{-1}}^2
    \rho(x)\nu(\dd x),
\]
and hence~\eqref{eq:EP-stationary-main}.
\end{proof}

\subsection{Proof of the reversibility criterion}

We first record the reverse drift as a consistency check. This calculation
is kept separate from the proof because it does not identify the full
domain of the Hilbert-space adjoint.

\paragraph{Formal \(L^2(\mu)\)-adjoint expression.}
Let \(\varphi\) and \(\psi\) be functions for which
\(\psi\rho\in\Dom(\Lzero)\), \(\psi\rho F\in\Dom(\divnu)\), and the
generator and divergence product rules below are valid. This specifies the
test class needed for the following integration-by-parts calculation; it is
not a claim about the full operator domain of \(L^*\). Symmetry of \(\Lzero\)
and Gaussian integration by parts give
\[
\begin{aligned}
    \int_{\Hh}\psi L\varphi\,\dd\mu
    &=
    \int_{\Hh}\varphi
    \left[
        \Lzero(\psi\rho)-\divnu(\psi\rho F)
    \right]\dd\nu
    \\
    &=
    \int_{\Hh}\varphi
    \Bigl[
        \rho\Lzero\psi
        +\inner{\Q \nabla\psi}{\nabla\rho}
        -\rho\inner{F}{\nabla\psi}
        +\psi\bigl(\Lzero\rho-\divnu(\rho F)\bigr)
    \Bigr]\dd\nu.
\end{aligned}
\]
By~\eqref{eq:FP}, the last parenthesis vanishes. Since
\(\nabla\rho=\rho \nabla\log\rho\) from \eqref{eq:log-gradient-consistency}, it follows that
\[
    \int_{\Hh}\psi L\varphi\,\dd\mu
    =
    \int_{\Hh}\varphi
    \left[
        \Lzero\psi
        +\inner{\Q \nabla\log\rho-F}{\nabla\psi}
    \right]\dd\mu.
\]
Thus, on this test class, the formal adjoint differential expression has
nonlinear drift
\[
    F^{\mathrm{rev}}
    =-F+\Q \nabla\log\rho
    =F-V.
\]
In finite dimensions, if \(p=\rho\gamma\) is the invariant Lebesgue
density, then the total reversed drift satisfies
\[
    Ax+F^{\mathrm{rev}}
    =-(Ax+F)+\Q\nabla\log p,
\]
which agrees with the classical reverse-drift
formula~\cite{HaussmannPardoux1986}. Infinite-dimensional reverse-diffusion
representations under different entropy or Malliavin-regularity hypotheses
were obtained in~\cite{FollmerWakolbinger1986,MilletNualartSanz1989}. The calculation above
is only a consistency check and does not claim to identify the full adjoint
domain or a strong reversed SPDE.

\begin{proof}[Proof of Theorem~\ref{thm:reversibility-criteria}]
\subsubsection*{Step 1. Entropy production, current, and gradient drift}

By~\eqref{eq:EP-stationary-main} and~
\eqref{eq:irreversible-drift},
\[
    \EP
    =
    \frac12\int_{\Hh}\|V(x)\|_{\Q^{-1}}^2\,\mu(\dd x).
\]
The integrand is nonnegative, so
\[
    \EP=0
    \quad\Longleftrightarrow\quad
    V=0\quad\mu\text{-a.e.}
\]
This proves \textup{(i)}\(\Longleftrightarrow\)\textup{(ii)}.

Because \(\rho>0\) \(\nu\)-almost everywhere, \(\mu=\rho\nu\) and
\(\nu\) are equivalent. Moreover, the identity
\(\nabla\log\rho=\nabla\rho/\rho\) from~\eqref{eq:log-gradient-consistency} gives
\[
\begin{aligned}
    J_\mu
    &=
    \rho F-\frac12\Q \nabla\rho
    =
    \frac{\rho}{2}
    \left(2F-\Q \nabla\log\rho\right)
    =
    \frac{\rho}{2}V.
\end{aligned}
\]
Consequently,
\[
    J_\mu=0
    \quad\Longleftrightarrow\quad
    V=0
    \quad\Longleftrightarrow\quad
    F=\frac12\Q \nabla\log\rho
    \qquad \mu\text{-a.e.}
\]
Hence statements \textup{(i)}--\textup{(iv)} are equivalent.

\subsubsection*{Step 2. The pathwise criterion}

The finite-time identity established in the preceding proof gives, for
every \(T>0\),
\begin{equation}\label{eq:finite-time-entropy-reversibility}
    \mathcal H\!\left(
        \Pp^+_{[0,T]}
        \,\middle|\,
        \Pp^-_{[0,T]}
    \right)
    =
    \frac T2\int_{\Hh}\|V(x)\|_{\Q^{-1}}^2\,\mu(\dd x).
\end{equation}
Proposition~\ref{prop:path-ratio} shows that the two path laws are
equivalent. Therefore the equality case in Gibbs' inequality implies
\[
    \mathcal H\!\left(
        \Pp^+_{[0,T]}
        \,\middle|\,
        \Pp^-_{[0,T]}
    \right)=0
    \quad\Longleftrightarrow\quad
    \Pp^+_{[0,T]}=\Pp^-_{[0,T]}.
\]
If \(V=0\) \(\mu\)-almost everywhere, then~
\eqref{eq:finite-time-entropy-reversibility} yields equality of the path
laws for every \(T>0\). Conversely, equality for any one \(T>0\) forces
the integral on the right-hand side of
\eqref{eq:finite-time-entropy-reversibility} to vanish, and hence
\(V=0\) \(\mu\)-almost everywhere. Thus
\textup{(ii)}\(\Longleftrightarrow\)\textup{(vii)}.

\subsubsection*{Step 3. Generator, detailed balance, and path reversal}

We first make  the semigroup fact used here explicit. Invariance of \(\mu\)
and Jensen's inequality give
\[
    \|P_tf\|_{L^2(\mu)}^2
    \leq
    \int_{\Hh}P_t(|f|^2)\,\dd\mu
    =
    \|f\|_{L^2(\mu)}^2.
\]
For \(f\in C_b(\Hh)\), continuity of the stationary sample paths gives
\[
    \|P_tf-f\|_{L^2(\mu)}^2
    \leq
    \E_\mu|f(X_t)-f(X_0)|^2
    \longrightarrow0
    \qquad(t\rightarrow0).
\]
Since \(\Hh\) is separable, \(C_b(\Hh)\) is dense in \(L^2(\mu)\);
contractivity extends the convergence to every \(f\in L^2(\mu)\).
Thus \((P_t)_{t\geq0}\) is a strongly continuous contraction semigroup on
\(L^2(\mu)\).

If its generator \(L\) is self-adjoint, then \(L\) is dissipative and the
spectral theorem, together with uniqueness of the generated semigroup,
gives \(P_t=e^{tL}\); hence every \(P_t\) is self-adjoint;
see~\cite{EngelNagel2000}. Conversely,
detailed balance implies that every \(P_t\) is self-adjoint on
\(L^2(\mu)\). Therefore, for \(f,g\in\Dom(L)\),
\[
\begin{aligned}
    \langle Lf,g\rangle_{L^2(\mu)}
    &=
    \lim_{t\downarrow0}
    \left\langle\frac{P_tf-f}{t},g\right\rangle_{L^2(\mu)}
    \\
    &=
    \lim_{t\downarrow0}
    \left\langle f,\frac{P_tg-g}{t}\right\rangle_{L^2(\mu)}
    =
    \langle f,Lg\rangle_{L^2(\mu)}.
\end{aligned}
\]
Thus \(L\) is symmetric. Since \(L\) generates a contraction semigroup,
the Hille--Yosida theorem shows that every
\(\lambda>0\) belongs to the resolvent set of \(L\). A symmetric operator
with a real point in its
resolvent is self-adjoint, so \(L\) is self-adjoint;
see~\cite{EngelNagel2000}. Therefore
\textup{(v)}\(\Longleftrightarrow\)\textup{(vi)}.

It remains to prove
\textup{(vi)}\(\Longleftrightarrow\)\textup{(vii)}. Assume detailed
balance, fix
\[
    0=t_0<t_1<\cdots<t_n=T,
    \qquad
    \Delta_k:=t_k-t_{k-1},
\]
and let \(f_0,\ldots,f_n\) be bounded real-valued measurable functions.
Write \(M_fh:=fh\). Stationarity and the Markov property yield
\[
\begin{aligned}
    \E_\mu\!\left[\prod_{k=0}^nf_k(X_{t_k})\right]=
    \left\langle
        f_0,
        P_{\Delta_1}M_{f_1}P_{\Delta_2}M_{f_2}\cdots
        M_{f_{n-1}}P_{\Delta_n}f_n
    \right\rangle_{L^2(\mu)}.
\end{aligned}
\]
Both \(P_{\Delta_k}\) and \(M_{f_k}\) are self-adjoint. Taking the
adjoint of the operator product therefore gives
\[
\begin{aligned}
    \E_\mu\!\left[\prod_{k=0}^nf_k(X_{t_k})\right]
    &=
    \left\langle
        f_n,
        P_{\Delta_n}M_{f_{n-1}}P_{\Delta_{n-1}}\cdots
        M_{f_1}P_{\Delta_1}f_0
    \right\rangle_{L^2(\mu)}
    \\
    &=
    \E_\mu\!\left[\prod_{k=0}^nf_k(X_{T-t_k})\right].
\end{aligned}
\]
Thus the forward and time-reversed processes have the same
finite-dimensional distributions. Indeed, an arbitrary finite collection
of times is covered by inserting the endpoints \(0,T\) and taking the
corresponding test functions equal to \(1\). Since \(\Hh\) is separable,
the evaluations at rational times generate the Borel \(\sigma\)-algebra of
\(C([0,T];\Hh)\). Both laws are supported on continuous paths; consequently,
\[
    \Pp^+_{[0,T]}=\Pp^-_{[0,T]}.
\]

Conversely, pathwise reversibility on \([0,t]\), applied to the functional
\(f(X_0)g(X_t)\), gives
\[
    \int_{\Hh}f\,P_tg\,\dd\mu
    =
    \int_{\Hh}g\,P_tf\,\dd\mu
\]
for all bounded measurable \(f,g\). This is detailed balance. Hence
\textup{(vi)}\(\Longleftrightarrow\)\textup{(vii)}, and Steps 1--3 prove
all seven equivalences. This is the standard correspondence between
symmetric Markov semigroups, detailed balance, and reversible stationary
processes; see~\cite[Theorem 4.3.3]{jiang2004}.

Finally, for \(\Phi=-\log\rho\), the gradients satisfy
\(\nabla\Phi=-\nabla\log\rho\) \(\nu\)-almost everywhere. Condition
\textup{(iv)} is therefore equivalent
to
\[
    F=-\frac12\Q \nabla\Phi
    \qquad \mu\text{-a.e.}
\]
This proves the final assertion.
\end{proof}

\section{Conclusion}

This paper establishes a path-space entropy-production formula for a class
of stationary stochastic evolution equations on a separable
Hilbert space. The essential step is to formulate densities, integration by
parts, and probability currents relative to the invariant Gaussian measure
\(\nu\) of a reversible Ornstein--Uhlenbeck reference process. This
circumvents both the absence of an infinite-dimensional Lebesgue reference
measure and the need for a bounded inverse of the trace-class noise
covariance. Combining the Hilbert-space Girsanov transform and the closed Gaussian divergence yields
\[
    \ep(x)
    =
    \frac12
    \|2F(x)-\Q \nabla\log\rho(x)\|_{\Q^{-1}}^2,
\]
and the total entropy production rate is
\[
    \EP
    =
    \frac12
    \int_{\Hh}
    \|2F-\Q \nabla\log\rho\|_{\Q^{-1}}^2
    \rho\,\dd\nu.
\]

The formula identifies
\[
    V=2F-\Q \nabla\log\rho,
    \qquad
    J_\mu=\frac{\rho}{2}V
\]
as the irreversibility field and the stationary Gaussian-reference current,
respectively. On the test class of the formal adjoint calculation, \(V\)
corresponds to the forward-minus-reversed nonlinear drift. Hence
\(\EP\geq0\), and \(\EP=0\) exactly when
\(V=J_\mu=0\) almost everywhere. Under the standing assumptions, this is
equivalent to the gradient relation
\(F=\Q \nabla\log\rho/2\), self-adjointness of the closed generator in
\(L^2(\mu)\), detailed balance, and invariance of the stationary path law
under time reversal.

In finite dimensions, writing \(p=\rho\gamma\) for the invariant Lebesgue
density gives \(V=2b-\Q\nabla\log p\) and
\(j_p=\gamma J_\mu\). The result therefore recovers the nondegenerate
entropy production formula~\cite[Theorem 4.1.7]{jiang2004}, while its minimum-energy norm has the same
range-and-pseudoinverse structure as the degenerate diffusion theory
of~\cite{DaCostaPavliotis2023}. The latter is a structural comparison: under
the present finite-dimensional reversible-reference assumptions, \(\Q\) is
positive definite.

The price of the explicit infinite-dimensional identity is the imposed
Cameron--Martin range and smoothness framework, together with an
\(\Hh\)-valued strong solution. Thus the
framework is more specialized than the general time-reversal results
in~\cite{FollmerWakolbinger1986,MilletNualartSanz1989}. It does not treat the
singular forward--backward regime outside the noise range, familiar from
degenerate finite-dimensional diffusion theory~\cite{DaCostaPavliotis2023},
nor does it construct a strong solution of the reversed SPDE in full
generality. Natural extensions
include replacing the strong-solution argument by a mild-solution
approximation, weakening the differentiability assumptions, and treating
multiplicative noise or a nonreversible Gaussian reference.

\section*{Acknowledgments} 

\section*{Author contributions} All authors made substantial contributions to the analysis and conclusions presented in this work. All authors have read and approved the final manuscript. The author uses AI tools to improve the readability and language of the manuscript. 

\section*{Conflict of interest}
The author declares no conflict of interest.

\section*{Data availability}
No datasets were generated or analyzed in this theoretical study.

\appendix
\section{Gaussian divergence calculation}
\label{app:gaussian-divergence-calculation}

We continue to use the closed Gaussian divergence introduced in
Definition~\ref{def:gaussian-divergence}. This avoids treating
\(Q_\infty^{-1}x\) as an \(\Hh\)-valued random variable: in the genuinely
infinite-dimensional case, a \(\nu=\N(0,Q_\infty)\)-typical point does not
belong to \(\Ran(Q_\infty)\)~\cite{Bogachev1998}. For the closed Gaussian
gradient and divergence, Gaussian integration by parts, and their relation to
Ornstein--Uhlenbeck operators, see~\cite{Bogachev2018, DP04}.

Under Assumption~\ref{ass:AFQ}\textup{(ii)}, \(Q_\infty\) is positive,
trace class, and injective. Hence it has an orthonormal eigenbasis
\(\{e_k\}_{k\geq1}\), 
\[
    Q_\infty e_k=\gamma_k e_k,
    \qquad \gamma_k>0.
\]
Write \(x_k:=\inner{x}{e_k}\), and let \(\partial_k\) denote differentiation in
the direction \(e_k\). Coordinatewise Gaussian integration by parts,
followed by Sobolev approximation, gives
\begin{equation}\label{eq:gaussian-ibp-coordinate}
    \int_{\Hh}\alpha\,\partial_k\beta\,\dd\nu
    =
    -\int_{\Hh}\beta\,\partial_k\alpha\,\dd\nu
    +\frac{1}{\gamma_k}
     \int_{\Hh}x_k\alpha\beta\,\dd\nu
\end{equation}
for \(\alpha\in W^{1,2}(\Hh,\nu)\) and bounded continuously differentiable
cylindrical \(\beta\) with bounded
gradient~\cite[Chapter~5]{Bogachev1998}.

If \(Y=\sum_{k=1}^mY_ke_k\) is a smooth finite-dimensional cylindrical
vector field, applying~\eqref{eq:gaussian-ibp-coordinate} componentwise
yields
\begin{equation}\label{eq:gaussian-divergence-coordinate}
    \divnu Y
    =
    \sum_{k=1}^m\partial_kY_k
    -
    \sum_{k=1}^m\frac{x_k}{\gamma_k}Y_k.
\end{equation}

We now compute the closed divergence of the drift. Exponential stability
implies that \(0\) belongs to the resolvent set of \(A\), with
\[
    A^{-1}=-\int_0^\infty S(r)\,\dd r.
\]
This is the standard Laplace-transform representation of the resolvent of an
exponentially stable semigroup~\cite{EngelNagel2000}.
Under Assumption~\ref{ass:OU}, Remark~\ref{rem:OU-symmetry} gives
\(S(r)\Q=\Q S^*(r)\). Consequently,
\[
\begin{aligned}
    Q_\infty
    &=\int_0^\infty S(r)\Q S^*(r)\,\dd r
      =\int_0^\infty S(2r)\Q\,\dd r =-\frac12A^{-1}\Q.
\end{aligned}
\]
Thus \(Q_\infty\Hh\subset\Dom(A)\), and the bounded-operator identity
\begin{equation}\label{eq:reversible-covariance-identity}
    AQ_\infty=-\frac12\Q
\end{equation}
holds on all of \(\Hh\).

Under Assumption~\ref{ass:path-regularity}\textup{(i)},
\[
    F=\Q\theta,
    \qquad
    \nabla F=\Q\nabla\theta.
\]
Let \(P_m\) be the projection onto
\(\operatorname{span}\{e_1,\ldots,e_m\}\). Since
\(Q_\infty e_k=\gamma_ke_k\) and
\(Q_\infty\Hh\subset\Dom(A)\), one has
\(e_k=\gamma_k^{-1}Q_\infty e_k\in\Dom(A)\); moreover,
\[
    \Q e_k=-2\gamma_kAe_k.
\]
For each \(k\), the scalar function
\(F_k(x):=\inner{F(x)}{e_k}\) belongs to
\(W^{1,2}(\Hh,\nu)\cap L^\infty(\nu)\), and
\[
    \partial_kF_k(x)=\inner{\Q\nabla\theta(x)e_k}{e_k}.
\]
Moreover,
\[
    -\frac{x_k}{\gamma_k}F_k(x)
    =2x_k\inner{A^*\theta(x)}{e_k}.
\]
Set
\[
    d_m(x):=
    \sum_{k=1}^m
    \inner{\Q\nabla\theta(x)e_k}{e_k}
    +2\inner{A^*\theta(x)}{P_mx}.
\]
For every bounded continuously differentiable cylindrical \(\varphi\)
with bounded gradient, summing~\eqref{eq:gaussian-ibp-coordinate} over
\(1\leq k\leq m\) yields
\[
    \int_{\Hh}\inner{P_mF}{\nabla\varphi}\,\dd\nu
    =-\int_{\Hh}\varphi d_m\,\dd\nu.
\]
Here \(P_mF\in L^2(\nu;\Hh)\) and \(d_m\in L^2(\nu)\). Cylindrical test
functions are dense in \(W^{1,2}(\Hh,\nu)\); see~\cite[Chapter~5]{Bogachev1998}.
It follows that \(P_mF\in\Dom(\divnu)\) and
\(\divnu(P_mF)=d_m\).

The trace terms converge uniformly in \(x\):
\[
    \sup_{x\in\Hh}
    \left|
        \Tr(\Q\nabla\theta(x))
        -
        \sum_{k=1}^m\inner{\Q\nabla\theta(x)e_k}{e_k}
    \right|
    \leq
    \|(I-P_m)\Q\|_{\mathcal L_1(\Hh)}
    \|\nabla\theta\|_\infty
    \longrightarrow0.
\]
Moreover,
\(\inner{A^*\theta}{P_m(\cdot)}\to
  \inner{A^*\theta}{\cdot}\) in \(L^2(\nu)\) by dominated convergence,
since
\[
    \bigl|\inner{A^*\theta(x)}{P_mx}\bigr|
    \leq
    \|A^*\theta\|_\infty\|x\|_{\Hh}
\]
and \(\int\|x\|_{\Hh}^2\,\nu(\dd x)<\infty\). Also,
\(P_mF\to F\) in \(L^2(\nu;\Hh)\) because \(F\) is bounded. Closedness of
\(\divnu\) now yields
\begin{equation}\label{divF}
    \divnu F(x)
    =
    \Tr\bigl(\Q\nabla\theta(x)\bigr)
    +2\inner{A^*\theta(x)}{x}
\end{equation}
in \(L^2(\nu)\). We use the continuous function
\begin{equation}\label{eq:divF-borel-representative}
    (\divnu F)(x)
    :=\Tr\bigl(\Q\nabla\theta(x)\bigr)
      +2\inner{A^*\theta(x)}{x},
    \qquad x\in\Hh,
\end{equation}
as the fixed representative of the class in~\eqref{divF}. Since
\(X_r\sim\mu=\rho\nu\ll\nu\), any two \(L^2(\nu)\)-representatives agree
at \(X_r\) almost surely for each fixed \(r\). Stationarity and Fubini's
theorem then show that their time integrals agree almost surely. Throughout
the preceding proofs, every pointwise occurrence of \(\divnu F\) is
understood through~\eqref{eq:divF-borel-representative}.
Formula~\eqref{divF} avoids both the nonexistent bounded inverse of \(\Q\)
and the pointwise expression \(Ax\), which need not be defined for a
\(\nu\)-typical \(x\).

\end{document}